\newtheorem{thm}{Theorem}[section]
\newtheorem{cor}[thm]{Corollary}
\newtheorem{prop}[thm]{Proposition}
\theoremstyle{definition}
\theoremstyle{remark}
\newtheorem{rem}[thm]{Remark}
\newtheorem*{rem*}{Remark}
\newtheorem{step}{Step}
\numberwithin{equation}{section}
\newcommand{\secref}[1]{Section~\textup{\ref{#1}}}
\newcommand{\thmref}[1]{Theorem~\textup{\ref{#1}}}
\newcommand{\corref}[1]{Corollary~\textup{\ref{#1}}}
\newcommand{\propref}[1]{Proposition~\textup{\ref{#1}}}
\newcommand{\C}{\mathbb C}
\newcommand{\CC}{\mathcal C}
\newcommand{\KK}{\mathcal K}
\newcommand{\HH}{\mathcal H}
\newcommand{\GG}{\mathcal G}
\newcommand{\LL}{\mathcal L}
\newcommand{\EE}{\mathcal E}
\newcommand{\BB}{\mathcal B}
\newcommand{\XX}{\mathcal X}
\newcommand{\YY}{\mathcal Y}
\renewcommand{\AA}{\mathcal A}
\newcommand{\midtext}[1]{\quad\text{#1}\quad}
\newcommand{\righttext}[1]{\quad\text{#1 }}
\renewcommand{\)}{\textup)}
\DeclareMathOperator{\aut}{Aut}
\DeclareMathOperator{\ad}{Ad}
\DeclareMathOperator{\supp}{supp}
\DeclareMathOperator*{\spn}{span}
\DeclareMathOperator*{\clspn}{\overline{\spn}}
\newcommand{\<}{\langle}
\renewcommand{\>}{\rangle}
\newcommand{\under}{\backslash}
\newcommand{\inv}{^{-1}}
\renewcommand{\bar}{\overline}
\newcommand{\what}{\widehat}
\newcommand{\wilde}{\widetilde}
\newcommand{\lt}{\textup{lt}}
\begin{document}
\title[Fell bundles and imprimitivity theorems]{Fell bundles and imprimitivity theorems: towards a universal generalized fixed point algebra}

\author[Kaliszewski]{S. Kaliszewski}
\address{Department of Mathematics and Statistics
\\Arizona State University
\\Tempe, Arizona 85287}
\email{kaliszewski@asu.edu}

\author[Muhly]{Paul S. Muhly}
\address{Department of Mathematics
\\The University of Iowa
\\Iowa City, IA 52242}
\email{pmuhly@math.uiowa.edu}

\author[Quigg]{John Quigg}
\address{Department of Mathematics and Statistics
\\Arizona State University
\\Tempe, Arizona 85287}
\email{quigg@asu.edu}

\author[Williams]{Dana P. Williams}
\address{Department of Mathematics
\\Dartmouth College
\\Hanover, NH 03755}
\email{dana.williams@dartmouth.edu}

\subjclass{Primary 46L55; Secondary 46M15, 18A25} 

\keywords{imprimitivity theorem, Fell bundle, groupoid}

\date{June 23, 2012}

\begin{abstract}
  We apply the One-Sided Action Theorem from the first paper in this
  series to prove that Rieffel's Morita equivalence between the
  reduced crossed product by a proper saturated action and the
  generalized fixed-point algebra is a quotient of a Morita
  equivalence between the full crossed product and a ``universal''
  fixed-point algebra.  We give several applications, to Fell bundles
  over groups, reduced crossed products as fixed-point algebras, and
  $C^*$-bundles.
\end{abstract}
\maketitle

\section{Introduction}
\label{intro}

The One-Sided Action Theorem \cite[Corollary~2.3]{kmqw2} goes as
follows: let $\AA\to\XX$ be a Fell bundle over a locally compact
groupoid, and let $G$ be a locally compact group.  If $G$ acts freely
and properly on $\AA$, then the Banach bundle $\AA\to\XX$ gives a
Yamagami equivalence between the semidirect-product Fell bundle
$\AA\rtimes G\to \XX\rtimes G$ and the orbit Fell bundle
$G\under\AA\to G\under\XX$.  In the current paper we will connect this
quotient equivalence with Rieffel's imprimitivity theorem for
generalized-fixed-point-algebras \cite[Corollary~1.7]{proper}.  By
\cite[Theorem~6.4]{mw:fell}, $\Gamma_c(\AA)$ completes to give a
$C^*(\AA\rtimes G)-C^*(G\under\AA)$ imprimitivity bimodule $X$.  By
\cite[Theorem~7.1]{kmqw1} there is an associated action $\alpha:G\to
\aut C^*(\AA)$, and an isomorphism
\[
C^*(\AA\rtimes G)\cong C^*(\AA)\rtimes_\alpha G,
\]
so $X$ may be viewed as a $C^*(\AA)\rtimes_\alpha G-C^*(G\under\AA)$
imprimitivity bimodule.

We will show that the action $\alpha$ is proper and saturated in
Rieffel's sense, so that Rieffel's theorem gives a
$C^*(\AA)\rtimes_{\alpha,r} G-C^*(\AA)^\alpha$ imprimitivity bimodule
$X_R$.  Since $C^*(\AA)\rtimes_{\alpha,r} G$ is a quotient of
$C^*(\AA)\rtimes_\alpha G$, it seems natural to guess that the
imprimitivity bimodule $X_R$ is a quotient of $X$, and we will verify
this this in \thmref{Rieffel} below.

Thus, in some sense $C^*(G\under\AA)$ can be regarded as a
``universal'', or ``full'' version of a generalized fixed-point
algebra, whereas Rieffel's fixed-point algebra is in some sense a
``reduced'' version.  More precisely, Rieffel's generalized
fixed-point algebra is Morita equivalent to a \emph{reduced} crossed
product, while our ``universal'' fixed-point algebra is Morita
equvalent to the associated full crossed product.

We begin in \secref{prelims} with some preliminaries on transformation
Fell bundles.

\secref{main} contains our main result on the Rieffel Surjection, and
we further show that the quotient map of our ``universal fixed-point
algebra'' $C^*(G\under\AA)$ onto the ``reduced one'' $C^*(\AA)^\alpha$
can be identified with the regular representation of
$C^*(G\under\AA)$.

In \secref{applications} we give three applications: (1) for a Fell
bundle $\BB$ over a locally compact group $G$, we show that (as has
surely been suspected by the cognoscenti) the regular representation
of $C^*(\BB)$ is a normalization of the canonical coaction, (2) the
reduced crossed product by a $C^*$-action can be viewed as a
generalized fixed-point algebra, and (3) for a $C^*$-bundle over a
space, the full and reduced fixed-point algebras coincide.

\section{Preliminaries}\label{prelims}

We adopt the conventions of \cite{kmqw1,kmqw2}.  All our Banach
bundles will be upper semicontinuous and separable, all our spaces and
groupoids will be locally compact Hausdorff and second countable, and
our groupoids will all have left Haar systems.

In \cite[Proposition~1.7]{kmqw1}, building upon a result in
\cite{mw:fell}, we proved that if $\AA\to\XX$ is a Fell bundle over a
groupoid and $X_0$ is a dense subspace of a Hilbert $C^*$-module $X$,
and if we are given linear maps $\{L_0(f):f\in\Gamma_c(\AA)\}$ on
$X_0$ such that
\begin{enumerate}
\item $L_0(f)L_0(g)=L_0(fg)$ for all $f,g\in\Gamma_c(\AA)$,

\item $\<L_0(f)x,y\>=\<x,L_0(f^*)y\>$ for all
  $f\in\Gamma_c(\AA),x,y\in X_0$,

\item $f\mapsto \<L_0(f)x,y\>$ is inductive-limit continuous for all
  $x,y\in X_0$, and

\item $\spn\{L_0(f)x:f\in\Gamma_c(\AA),x\in X_0\}$ is dense in $X$,
\end{enumerate}
then $L_0$ extends uniquely to a nondegenerate homomorphism
$L:C^*(\AA)\to \LL(X)$.
In item (iii), recall that inductive-limit continuity means the following:
for any compact $K\subset \XX$,
and for any net $\{f_i\}$ in $\Gamma_c(\AA)$ with every $f_i$ supported in $K$,
if $f_i\to 0$ uniformly then
$\<L_0(f_i)x,y\>\to 0$.
In some applications of this result we will
have $X=C^*(\BB)$ and $X_0=\Gamma_c(\BB)$ for some Fell bundle $\BB$
(where $C^*(\BB)$ is regarded as a Hilbert module over itself in the
canonical way).  Another special case which can arise is where $\XX$
is a trivial groupoid and $\AA$ is a single $C^*$-algebra $A$, in
which case (iii) means that $a\mapsto \<L(a)x,y\>$ is norm continuous
for all $x,y\in X_0$.

Given a Fell bundle $\BB\to\YY$ over a groupoid, and an action of
$\YY$ on a space $\Omega$, in \cite[Section~A.1]{kmqw2} we defined a
transformation Fell bundle $\BB*\Omega\to \YY*\Omega$. We will need to
know a little more about this construction here:


\begin{prop}\label{Phi}
  Let $\BB\to\YY$ be a Fell bundle over a groupoid, and let $\YY$ act
  on a space $\Omega$.  Then there are nondegenerate homomorphisms
  $\Phi:C^*(\BB)\to M(C^*(\BB*\Omega))$ and $\mu:C_0(\Omega)\to
  M(C^*(\BB*\Omega))$ such that if $f\in \Gamma_c(\BB)$, $\phi\in
  C_0(\Omega)$, and $a\in \Gamma_c(\BB*\Omega)$, then $\Phi(f)a$ and
  $\mu(\phi)a$ are the sections in $\Gamma_c(\BB*\Omega)$ determined
  by
  \begin{align}
    \label{Phi def}
    \bigl(\Phi(f)a\bigr)_1(y,u)&=\int_\YY f(x)a_1(x\inv
    y,u)\,d\lambda^{r(y)}(x).
    \\
    \label{mu def}
    \bigl(\mu(\phi)a\big)_1(y,u)&=\phi(y\cdot u)a_1(y,u).
  \end{align}
  Moreover,
  \begin{equation}\label{Phi mu}
    \Phi(f)\mu(\phi)=f\boxtimes \phi,
  \end{equation}
  where $f\boxtimes \phi$ is the section in $\Gamma_c(\BB*\Omega)$
  determined by
  \[
  (f\boxtimes \phi)_1(y,u)=f(y)\phi(u),
  \]
  and we have
  \begin{equation}\label{density}
    \clspn\{\Phi(C^*(\BB))\mu(C_0(\Omega))\}=C^*(\BB*\Omega).
  \end{equation}
\end{prop}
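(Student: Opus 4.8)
The plan is to produce $\Phi$ and $\mu$ from two applications of \cite[Proposition~1.7]{kmqw1}, in each case with $X=C^*(\BB*\Omega)$ regarded as a Hilbert module over itself --- so that $\LL(X)=M(C^*(\BB*\Omega))$ --- and $X_0=\Gamma_c(\BB*\Omega)$. For $\Phi$ the Fell bundle in that proposition is $\BB$, and for $f\in\Gamma_c(\BB)$ I would let $L_0(f)$ be the operator on $X_0$ given by the right-hand side of \eqref{Phi def}; for $\mu$ the Fell bundle in that proposition is the trivial line bundle $\Omega\times\C\to\Omega$ over the space $\Omega$ (for which $C^*(\Omega\times\C)=C_0(\Omega)$ and $\Gamma_c(\Omega\times\C)=C_c(\Omega)$), and for $\phi\in C_c(\Omega)$ I would let $M_0(\phi)$ be the operator given by the right-hand side of \eqref{mu def}. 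First one checks that these formulas really do send $\Gamma_c(\BB*\Omega)$ into itself: the resulting sections are continuous by the standard argument for integrated sections of Banach bundles over groupoids, and their supports lie in fixed compacta built from $\supp f$ (resp.\ $\supp\phi$) and the support of the argument. One then verifies hypotheses (i)--(iv) of \cite[Proposition~1.7]{kmqw1} for $L_0$ and for $M_0$, and finally deduces \eqref{Phi mu} and \eqref{density}.

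Conditions (i) and (ii) --- the identities $L_0(f)L_0(g)=L_0(f*g)$ and $\<L_0(f)x,y\>=\<x,L_0(f^*)y\>$, and their analogues for $M_0$ --- are Fubini and change-of-variables computations from the formulas for convolution and involution on $\Gamma_c(\BB*\Omega)$ and the left-invariance of the Haar system on $\YY$; for (ii) one writes $\<L_0(f)x,y\>$ and $\<x,L_0(f^*)y\>$ as double integrals, substitutes a product of two integration variables for one, and applies Fubini, while for $M_0$ the point is just that the scalars $\phi(y\cdot u)$ commute past everything. Condition (iii) holds because $L_0(f)$ and $M_0(\phi)$ carry inductive-limit-null nets to inductive-limit-null nets and the universal $C^*$-norm on $\Gamma_c(\BB*\Omega)$ is dominated by the inductive-limit-continuous $I$-norm, so that $\<L_0(f_i)x,y\>=(L_0(f_i)x)^*y\to0$ in $C^*(\BB*\Omega)$. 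For the nondegeneracy condition (iv): given $a\in\Gamma_c(\BB*\Omega)$ the set $\{y\cdot u:(y,u)\in\supp a\}$ is compact, so taking $\phi\in C_c(\Omega)$ equal to $1$ there gives $M_0(\phi)a=a$, whence $M_0(C_c(\Omega))X_0=X_0$ is dense; and if $\{f_i\}$ is an approximate identity for the convolution algebra $\Gamma_c(\BB)$ with supports shrinking to the unit space, then $L_0(f_i)a\to a$ in the inductive-limit topology, hence in norm, so $\spn\{L_0(f)a\}$ is dense. This yields nondegenerate homomorphisms $\Phi$ and $\mu$ with the stated formulas.

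For \eqref{Phi mu}, applying $\Phi(f)\mu(\phi)$ and $f\boxtimes\phi$ (acting by convolution) to an arbitrary $a\in\Gamma_c(\BB*\Omega)$ and using \eqref{Phi def} and \eqref{mu def} shows both produce the section
\[
(y,u)\mapsto\int_\YY f(x)\,\phi\bigl((x\inv y)\cdot u\bigr)\,a_1(x\inv y,u)\,d\lambda^{r(y)}(x),
\]
so $\Phi(f)\mu(\phi)=f\boxtimes\phi$ in $M(C^*(\BB*\Omega))$. For \eqref{density}, since $\Phi$ and $\mu$ are contractive $*$-homomorphisms and each $f\boxtimes\phi$ lies in $C^*(\BB*\Omega)$, \eqref{Phi mu} gives $\clspn\{\Phi(C^*(\BB))\mu(C_0(\Omega))\}=\clspn\{f\boxtimes\phi:f\in\Gamma_c(\BB),\ \phi\in C_c(\Omega)\}$, so it is enough to show $\spn\{f\boxtimes\phi\}$ is dense in $\Gamma_c(\BB*\Omega)$ in the inductive-limit topology (and hence in the $C^*$-norm). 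I expect this density to be the main obstacle. Its inductive-limit closure $\VV$ is a $C_c(\YY*\Omega)$-submodule of $\Gamma_c(\BB*\Omega)$: any $\psi\in C_c(\YY*\Omega)$ is approximated uniformly on compacta by finite sums $\sum_k g_k(y)\chi_k(u)$ by the Stone--Weierstrass theorem, so $\psi\cdot(f\boxtimes\phi)$ lies in $\VV$. Moreover $\VV$ is fibrewise dense in $\BB*\Omega$, since at $(y,u)$ the elements $f(y)\phi(u)$ exhaust a dense subset of the fibre $B_y$. By the standard approximation theorem for sections of Banach bundles over locally compact spaces, a closed fibrewise dense $C_c(\YY*\Omega)$-submodule of $\Gamma_c(\BB*\Omega)$ equals $\Gamma_c(\BB*\Omega)$; hence $\VV=\Gamma_c(\BB*\Omega)$, which gives \eqref{density} and completes the argument.
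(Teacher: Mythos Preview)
Your proposal is correct and follows the same architecture as the paper's proof: both construct $\Phi$ and $\mu$ by verifying hypotheses (i)--(iv) of \cite[Proposition~1.7]{kmqw1} with $X_0=\Gamma_c(\BB*\Omega)$, then obtain \eqref{Phi mu} by a direct computation and \eqref{density} from the inductive-limit density of $\spn\{f\boxtimes\phi\}$ in $\Gamma_c(\BB*\Omega)$. The only differences are in the execution of (iii) and (iv) --- the paper estimates $\<\Phi_0(f)a,b\>$ directly rather than factoring through the inductive-limit continuity of $f\mapsto L_0(f)a$ (your phrasing ``$L_0(f)$ carries inductive-limit-null nets\ldots'' should read ``$f\mapsto L_0(f)a$ carries\ldots''), and for (iv) it uses the identity $\Phi_0(f)(g\boxtimes\phi)=(f*g)\boxtimes\phi$ in place of your approximate-identity argument --- but these are interchangeable standard moves.
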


\begin{proof}
  To establish the existence of the nondegenerate homomorphisms $\Phi$
  and $\mu$, we will apply the above-mentioned extension result
  \cite[Proposition~1.7]{kmqw1} with $X=C^*(\BB*\Omega)$ and
  $X_0=\Gamma_c(\BB*\Omega)$.

  We begin with $\Phi$.  For each $f\in\Gamma_c(\BB)$, \eqref{Phi def}
  defines a linear map $\Phi_0(f)$ on $\Gamma_c(\BB*\Omega)$.  If we
  can show that
  \begin{enumerate}
  \item $\Phi_0$ is multiplicative,

  \item $\<\Phi_0(f)a,b\>=\<a,\Phi_0(f^*)b\>$,

  \item $f\mapsto \<\Phi_0(f)a,b\>$ is inductive-limit continuous, and

  \item $\spn\{\Phi_0(f)a:f\in \Gamma_c(\BB),a\in
    \Gamma_c(\BB*\Omega)\}$ is inductive-limit dense in
    $\Gamma_c(\BB*\Omega)$,
  \end{enumerate}
  then it will follow that $\Phi_0$ extends uniquely to a
  nondegenerate homomorphism $\Phi:C^*(\BB)\to M(C^*(\BB*\Omega))$.

  For (i), if $f,g\in \Gamma_c(\BB)$ and $a\in \Gamma_c(\BB*\Omega)$
  we have
  \begin{align*}
    &\bigl(\Phi_0(f)\Phi_0(g)a\bigr)_1(y,u) \\&\quad=\int_\YY f(x)
    \bigl(\Phi_0(g)a\bigr)_1(x\inv y,u)\,d\lambda^{r(y)}(x)
    \\&\quad=\int_\YY f(x) \int_\YY g(z)a_1(z\inv x\inv y,u)
    \,d\lambda^{r(x\inv y)}(z)\,d\lambda^{r(y)}(x)
    \\&\quad=\int_\YY\int_\YY f(x)g(x\inv z)a_1(z\inv y,u)
    \,d\lambda^{r(y)}(z)\,d\lambda^{r(y)}(x) \\&\quad=\int_\YY
    \int_\YY f(x)g(x\inv z)\,d\lambda^{r(y)}(x) a_1(z\inv
    y,u)\,d\lambda^{r(y)}(z) \\&\quad=\int_\YY (f*g)(z) a_1(z\inv
    y,u)\,d\lambda^{r(y)}(z) \\&\quad=\bigl(\Phi_0(f*g)a\bigr)_1(y,u).
  \end{align*}

  For (ii), if $f\in \Gamma_c(\BB)$ and $a,b\in \Gamma_c(\BB*\Omega)$
  we have
  \begin{align*}
    &\bigl(\<\Phi_0(f)a,b\>_{\Gamma_c(\BB*\Omega)}\bigr)_1(y,u)
    \\&\quad=\bigl((\Phi_0(f)a)^**b\bigr)_1(y,u) \\&\quad=\int_\YY
    (\Phi_0(f)a)^*_1(x,x\inv y\cdot u)b_1(x\inv y,u)
    \,d\lambda^{r(y)}(x) \\&\quad=\int_\YY (\Phi_0(f)a)_1(x\inv,y\cdot
    u)^*b_1(x\inv y,u) \,d\lambda^{r(y)}(x) \\&\quad=\int_\YY
    \left(\int_\YY f(z)a_1(z\inv x\inv,y\cdot u)
      \,d\lambda^{r(x\inv)}(z)\right)^* b_1(x\inv y,u)
    \,d\lambda^{r(y)}(x) \\&\quad=\int_\YY \int_\YY a_1(z\inv
    x\inv,y\cdot u)^*f(z)^* \,d\lambda^{s(x)}(z) b_1(x\inv y,u)
    \,d\lambda^{r(y)}(x) \\&\quad=\int_\YY \int_\YY a_1(z\inv,y\cdot
    u)^*f(x\inv z)^* b_1(x\inv y,u) \,d\lambda^{r(x)}(z)
    \,d\lambda^{r(y)}(x) \\&\quad=\int_\YY \int_\YY a_1(z\inv,y\cdot
    u)^*f(x\inv z)^* b_1(x\inv y,u) \,d\lambda^{r(y)}(z)
    \,d\lambda^{r(y)}(x) \\&\quad=\int_\YY \int_\YY a_1(z\inv,y\cdot
    u)^*f(x\inv z)^* b_1(x\inv y,u) \,d\lambda^{r(y)}(x)
    \,d\lambda^{r(y)}(z) \\&\quad=\int_\YY \int_\YY a_1(z\inv,y\cdot
    u)^*f(x\inv z)^* b_1(x\inv y,u) \,d\lambda^{r(z)}(x)
    \,d\lambda^{r(y)}(z) \\&\quad=\int_\YY \int_\YY a_1(z\inv,y\cdot
    u)^*f(x\inv)^* b_1(x\inv z\inv y,u) \,d\lambda^{s(z)}(x)
    \,d\lambda^{r(y)}(z) \\&\quad=\int_\YY a_1(z\inv,y\cdot u)^*
    \int_\YY f^*(x) b_1(x\inv z\inv y,u) \,d\lambda^{r(z\inv y)}(x)
    \,d\lambda^{r(y)}(z) \\&\quad=\int_\YY a_1(z\inv,y\cdot u)^*
    \bigl(\Phi_0(f^*)b\bigr)_1(z\inv y,u) \,d\lambda^{r(y)}(z)
    \\&\quad=\bigl(a^**(\Phi_0(f^*)b)\bigr)_1(y,u)
    \\&\quad=\bigl(\<a,\Phi_0(f^*)b\>_{\Gamma_c(\BB*\Omega)})\bigr)_1(y,u).
  \end{align*}

  For (iii), let $K\subset \YY$ be compact, and let
  \[
  \Gamma_K(\BB)=\{f\in \Gamma_c(\BB):\supp f\subset K\}.
  \]
  For $f\in \Gamma_K(\BB)$ and $a,b\in \Gamma_c(\BB*\Omega)$, the
  inner product $\<\Phi_0(f)a,b\>$ in this part is to be interpreted
  in $C^*(\BB*\Omega)$.  It suffices to show that for fixed
  $a,b\in\Gamma_c(\BB*\Omega)$ the linear map
  \[
  f\mapsto \<\Phi_0(f)a,b\> :\Gamma_K(\BB)\to C^*(\BB*\Omega)
  \]
  is bounded when $\Gamma_K(\BB)$ is given the uniform norm
  $\|\cdot\|_u$, and for this it suffices to show that this linear map
  actually takes values in $\Gamma_P(\BB*\Omega)$ for some compact set
  $P\subset \YY*\Omega$ and is bounded when $\Gamma_P(\BB*\Omega)$ is
  given its uniform norm $\|\cdot\|_u$.

  Choose compact sets $L\subset \YY$ and $M\subset \YY^0*\Omega$ such
  that both $a$ and $b$ are supported in $L\times M$.  The computation
  in (ii) shows that for $a,b\in\Gamma_c(\BB*\Omega)$ we have
  \begin{align*}
    &\bigl(\<\Phi_0(f)a,b\>_{\Gamma_c(\BB*\Omega)}\bigr)_1(y,u)
    \\&\quad=\int_\YY \int_\YY a_1(z\inv,y\cdot u)^*f(x\inv z)^*
    b_1(x\inv y,u) \,d\lambda^{r(y)}(z) \,d\lambda^{r(y)}(x),
  \end{align*}
  so for the left-hand side to be nonzero we must have $u\in M$.  Then
  for the integration we can assume that $z\inv\in L$ and $x\inv z\in
  K$, so $x\in L\inv K\inv$, and that $x\inv y\in L$, so $y\in L\inv
  K\inv L$.  Thus $\<\Phi_0(f)a,b\>_{\Gamma_c(\BB*\Omega)}$ is
  supported in the compact set
  \[
  P:=(L\inv K\inv L)*M\subset \YY*\Omega.
  \]
  We have
  \begin{align*}
    &\Bigl\|\bigl(\<\Phi_0(f)a,b\>_{\Gamma_c(\BB*\Omega)}\bigr)_1(y,u)\Bigr\|
    \\&\quad\le \int_{L\inv K\inv}\int_{L\inv} \bigl\|a_1(z\inv,y\cdot
    u)\bigr\| \bigl\|f(x\inv z)\bigr\| \bigl\|b_1(x\inv y,u)\bigr\|
    \\&\quad\hspace*{1in} \,d\lambda^{r(y)}(z) \,d\lambda^{r(y)}(x)
    \\&\quad\le \|a\|_u\|f\|_u\|b\|_u \lambda^{r(y)}(L\inv
    K\inv)\lambda^{r(y)}(L\inv).
  \end{align*}
  Since the sets $L\inv K\inv$, $L\inv$, and $L\inv K\inv L$ are
  compact, by the properties of Haar systems there is a constant $c$
  such that
  \[
  \lambda^{r(y)}(L\inv K\inv)\lambda^{r(y)}(L\inv) \le c\righttext{for
    all}y\in L\inv K\inv L,
  \]
  giving the required boundedness.

  For (iv), first note that the properties of Banach bundles imply
  that
  \begin{equation}\label{box dense}
    \text{$\Gamma_c(\BB)\boxtimes C_0(\Omega)$ is inductive-limit dense in $\Gamma_c(\BB*\Omega)$.}
  \end{equation}
  Moreover, it is clear that if $f,g\in \Gamma_c(\BB)$ and $\phi\in
  C_c(\YY^0*\Omega)$ then
  \[
  \Phi_0(f)(g\otimes\phi)=(f*g)\boxtimes \phi.
  \]
  Since $\spn\{f*g:f,g\in \Gamma_c(\BB)\}$ is inductive-limit dense in
  $\Gamma_c(\BB)$, and since for fixed $\phi\in C_c(\YY^0*\Omega)$ the
  linear map
  \[
  g\mapsto g\boxtimes \phi:\Gamma_c(\BB)\to \Gamma_c(\BB*\Omega)
  \]
  is inductive-limit continuous, the required density follows.

  We have proved the existence of $\Phi$ satisfying \eqref{Phi def}.
  The proof of the existence of $\mu$ satisfying \eqref{mu def} is
  similar, but easier.  Again, for each $\phi\in C_0(\Omega)$
  \eqref{mu def} defines a linear map $\mu_0(\phi)$ on
  $\Gamma_c(\BB*\Omega)$, and we must verify appropriate versions of
  the properties (i)--(iv). Of these, (i) and (iii) are obvious, and
  (iv) follows from density of $\Gamma_c(\BB)\boxtimes
  C_c(\Omega)$. We give the computation for (ii):
  \begin{align*}
    \<\mu_0(f)a,b\>_1(y,u) &=\int_\YY (\mu_0(f)a)^*_1(x,x\inv y\cdot
    u)b_1(x\inv y,u)\,\lambda^{r(y)}(x) \\&=\int_\YY
    (\mu_0(f)a)_1(x\inv,y\cdot u)^*b_1(x\inv y,u)\,\lambda^{r(y)}(x)
    \\&=\int_\YY a_1(x\inv,y\cdot u)^*\bar{f(x\inv y\cdot u)}b_1(x\inv
    y,u)\,\lambda^{r(y)}(x) \\&=\int_\YY a^*_1(x,x\inv y\cdot
    u)(\mu_0(\bar f)b)_1(x\inv y,u)\,\lambda^{r(y)}(x)
    \\&=\<a,\mu_0(\bar f)b\>(y,u).
  \end{align*}

  Finally, \eqref{Phi mu} is a simple computation, and then
  \eqref{density} follows from \eqref{box dense}.
\end{proof}

\section{The Rieffel Surjection}
\label{main}

Our main result is the following application of the One-Sided Action
theorem:

\begin{thm}[Rieffel Surjection]\label{Rieffel}
  Let $p:\AA\to\XX$ be a Fell bundle over a locally compact groupoid,
  and let $G$ be a locally compact group.  Suppose that $G$ acts
  freely and properly on \(the left of\) $\AA$ by automorphisms, so
  that we also have an associated action $\alpha:G\to \aut C^*(\AA)$.
  Then there exist maps $\Upsilon$ and $\Phi$ such that
  \begin{equation*}
    (\Lambda,\Upsilon,\Phi): (C^*(\AA)\rtimes_\alpha
    G,X,C^*(G\under\AA))
    \to (C^*(\AA)\rtimes_{\alpha,r} G,X_R,C^*(\AA)^\alpha)
  \end{equation*}
  is a surjection of imprimitivity bimodules, where $\Lambda$ is the
  regular representation.
\end{thm}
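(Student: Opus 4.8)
The plan is to realize Rieffel's bimodule $X_R$ as a completion of $\Gamma_c(\AA)$, to take $\Upsilon$ to be the extension to $X$ of the identity map of $\Gamma_c(\AA)$, and to obtain $\Phi$ as the map of right-coefficient algebras that automatically accompanies $\Upsilon$; the two main tasks are to fit $\alpha$ into Rieffel's framework and to check that $\Lambda$ carries the left-hand inner product of $X$ onto that of $X_R$.

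First I would bring $\alpha$ into the setting of \cite{proper}, using the dense $*$-subalgebra $\Gamma_c(\AA)\subset C^*(\AA)$, and show that $\alpha$ is proper and saturated. This is where properness of the $G$-action on $\XX$ enters: for $f,h\in\Gamma_c(\AA)$ the section $f^*h$ has compact support, so for each $k\in\Gamma_c(\AA)$ the map $g\mapsto\alpha_g(f^*h)k$ is supported in a compact subset of $G$; this lets one show that $\int_G\alpha_g(f^*h)\,dg$ converges strictly in $M(C^*(\AA))$, which defines the $C^*(\AA)^\alpha$-valued inner product, $C^*(\AA)^\alpha$ being by definition the closed span of these elements. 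A parallel but easier computation produces the $C^*(\AA)\rtimes_{\alpha,r}G$-valued inner product as the class of an appropriate $C_c(G,C^*(\AA))$-valued function built from $f$, $h$ and $\alpha$, and \emph{saturation} --- density of the span of these classes in $C^*(\AA)\rtimes_{\alpha,r}G$ --- follows from \emph{freeness} of the action together with the fact that the products $\Gamma_c(\AA)\cdot\Gamma_c(\AA)^*$ span a dense subspace of $C^*(\AA)$. By \cite[Corollary~1.7]{proper} this yields Rieffel's imprimitivity bimodule $(C^*(\AA)\rtimes_{\alpha,r}G,X_R,C^*(\AA)^\alpha)$ with $X_R$ the completion of $\Gamma_c(\AA)$.

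Next I would compare the left-hand inner products. Transporting the $C^*(\AA\rtimes G)$-valued inner product of $X$ through the isomorphism $C^*(\AA\rtimes G)\cong C^*(\AA)\rtimes_\alpha G$ of \cite[Theorem~7.1]{kmqw1} and restricting to $\Gamma_c(\AA)$ gives, up to the usual bookkeeping with the Haar systems on $\XX$ and on $G$, the same $C_c(G,C^*(\AA))$-valued function as in the previous paragraph; since the regular representation $\Lambda$ is the identity on $C_c(G,C^*(\AA))$, composing with $\Lambda$ reproduces exactly the left-hand inner product of $X_R$. As $\Lambda$ is contractive, the imprimitivity-bimodule norm identities give $\|\xi\|_{X_R}\le\|\xi\|_X$ for $\xi\in\Gamma_c(\AA)$, so the identity map of $\Gamma_c(\AA)$ extends to a linear map $\Upsilon\colon X\to X_R$ with dense range, intertwining the left actions via $\Lambda$ (this holds on the dense subalgebra $\Gamma_c(\AA\rtimes G)$, hence everywhere), and $\Upsilon$ is precisely the quotient of $X$ by the Rieffel correspondent of the ideal $\ker\Lambda$. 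By the theory of the Rieffel correspondence the quotient of $(C^*(\AA)\rtimes_\alpha G,X,C^*(G\under\AA))$ by $\ker\Lambda$ is again an imprimitivity bimodule, with left algebra $C^*(\AA)\rtimes_{\alpha,r}G$, middle module $X/\overline{\ker\Lambda\cdot X}$, and right algebra $C^*(G\under\AA)/J$ for the correspondent ideal $J$; the comparison above identifies the middle module with $X_R$, and the induced identification of right algebras is the desired surjective $*$-homomorphism $\Phi\colon C^*(G\under\AA)\to C^*(\AA)^\alpha$, satisfying $\Phi(\<\xi,\eta\>_{C^*(G\under\AA)})=\<\xi,\eta\>_{C^*(\AA)^\alpha}$ on $\Gamma_c(\AA)$. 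Then $(\Lambda,\Upsilon,\Phi)$ is a surjection of imprimitivity bimodules, the remaining module-action and inner-product compatibilities being immediate on $\Gamma_c(\AA)$ and extending by continuity. (Alternatively, the homomorphisms involved can be produced concretely using \propref{Phi}.)

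I expect the main obstacle to be the first task --- verifying in Rieffel's precise sense that $\alpha$ is proper and saturated. Establishing strict convergence of $\int_G\alpha_g(f^*h)\,dg$ and identifying its value requires care with the interplay of the Haar systems on $\XX$ and $G$, and saturation is the genuinely delicate point, where freeness of the $G$-action is indispensable. A secondary difficulty is making the isomorphism $C^*(\AA\rtimes G)\cong C^*(\AA)\rtimes_\alpha G$ explicit enough to see that it carries the inner product of $X$ onto the stated formula, so that $\Lambda$ is visibly implemented as the identity on $C_c(G,C^*(\AA))$.
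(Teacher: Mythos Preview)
Your core strategy---take $\Upsilon$ to be the extension of the identity on $\Gamma_c(\AA)$---is exactly the paper's, but your execution diverges in two respects.

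First, the paper does not work directly with $\AA$. It invokes the principal-bundle decomposition $\AA\cong\BB*\Omega$ from \cite[Theorem~A.11]{kmqw2} to reduce to the transformation-bundle setting (\thmref{Rieffel transformation}), where the formulas for both bimodules can be written out and compared term by term. Second, and more substantively, the paper constructs $\Phi$ \emph{explicitly} as the map $C^*(\BB)\to M(C^*(\BB*\Omega))$ of \propref{Phi}, proves in \propref{onto} that its image is exactly $C^*(\BB*\Omega)^\alpha$, and then verifies \emph{both} inner-product compatibilities \eqref{left} and \eqref{right} by direct computation. You instead check only the left inner product and extract $\Phi$ abstractly from the Rieffel correspondence, using that the right algebra of an imprimitivity bimodule is determined by the left-module structure. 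This is a legitimate shortcut for the bare existence statement, but the paper's explicit $\Phi$ is not gratuitous: it is precisely what is needed for \corref{SW} (identifying $\Phi$ with the regular representation of $C^*(\BB)$) and for the applications in \secref{applications}. Your abstract $\Phi$ would have to be unpacked to recover those results.

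On properness and saturation, the paper sidesteps your anticipated ``main obstacle'' entirely: rather than verifying Rieffel's conditions by hand, it observes that there is a $G$-equivariant nondegenerate embedding $C_0(\XX^0)\hookrightarrow M(C^*(\BB*\Omega))$ and then cites \cite[Theorem~5.7]{integrable} and \cite[Lemma~4.1]{HRWsymmetric}. Your direct argument for saturation (``follows from freeness together with density of $\Gamma_c(\AA)\cdot\Gamma_c(\AA)^*$'') is too vague as stated; the role of freeness is mediated precisely through the $C_0(\XX^0)$-embedding, and you would end up reproving the cited lemmas.
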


The above theorem will be proven in the following equivalent form,
rephrased using the principal-bundle decomposition
\cite[Theorem~A.11]{kmqw2}:

\begin{thm}[Rieffel Surjection]\label{Rieffel transformation}
  Let $p:\BB\to\YY$ be a Fell bundle over a locally compact groupoid,
  and let $G$ be a locally compact group. Suppose that both $\YY$ and
  $G$ act on \(the left of\) a locally compact Hausdorff space
  $\Omega$, 
  that the action of $G$ is free and proper and commutes
  with the $\YY$-action, 
  and that the fibring map $\Omega\to \YY^0$ associated to the 
  $\YY$-action induces an identification of 
  $G\under \Omega$ with $\YY^0$,
  so that $G$ also acts freely and properly by
  automorphisms on the transformation Fell bundle $\BB*\Omega\to
  \YY*\Omega$, and we also have an associated action $\alpha:G\to\aut
  C^*(\BB*\Omega)$.  Then there exist maps $\Upsilon$ and $\Phi$ such
  that
  \begin{equation*}
    (\Lambda,\Upsilon,\Phi): (C^*(\BB*\Omega))\rtimes_\alpha
    G,X,C^*(\BB))
    \to (C^*(\BB*\Omega)\rtimes_{\alpha,r}
    G,X_R,C^*(\BB*\Omega)^\alpha)
  \end{equation*}
  is a surjection of imprimitivity bimodules, where $\Lambda$ is the
  regular representation.
\end{thm}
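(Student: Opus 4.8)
The plan is to build the top imprimitivity bimodule from the One-Sided Action Theorem, to recognize the bottom row as a Rieffel-type quotient of it, and to match the two on a common dense subspace. First, applying \cite[Corollary~2.3]{kmqw2} to the free and proper $G$-action on $\BB*\Omega\to\YY*\Omega$ yields a Yamagami equivalence between $(\BB*\Omega)\rtimes G$ and the orbit Fell bundle $G\under(\BB*\Omega)$. The hypothesis $G\under\Omega\cong\YY^0$ identifies $G\under(\YY*\Omega)$ with $\YY$ and hence $G\under(\BB*\Omega)$ with $\BB$ (the principal-bundle decomposition \cite[Theorem~A.11]{kmqw2}), so by \cite[Theorem~6.4]{mw:fell} the space $\Gamma_c(\BB*\Omega)$ completes to a $C^*((\BB*\Omega)\rtimes G)-C^*(\BB)$ imprimitivity bimodule $X$, and by \cite[Theorem~7.1]{kmqw1} the left coefficient algebra is $C^*(\BB*\Omega)\rtimes_\alpha G$. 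I would then record, on the dense subspace $\Gamma_c(\BB*\Omega)$, the explicit formulas for the two module actions and the two inner products coming from \cite{mw:fell}; \propref{Phi} supplies the multiplier homomorphisms $\Phi\colon C^*(\BB)\to M(C^*(\BB*\Omega))$ and $\mu\colon C_0(\Omega)\to M(C^*(\BB*\Omega))$ that will carry the comparison of $C^*(\BB)$ with the fixed-point algebra.

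Next I would check that $\alpha$ is proper and saturated in Rieffel's sense, using $A_0=\Gamma_c(\BB*\Omega)$ as the dense, relatively continuous $*$-subalgebra. Properness of the $G$-action on $\Omega$ forces $s\mapsto\alpha_s(a^*b)$ to be compactly supported for $a,b\in A_0$ (here $\alpha_s$ acts only on the $\Omega$-coordinate, so on $\Phi(C^*(\BB))$ it is trivial and on $\mu(C_0(\Omega))$ it is the translation action), so the strict integrals $\langle a,b\rangle_\infty=\int_G\alpha_s(a^*b)\,ds$ exist; the remaining integrability and continuity conditions follow from Haar-system estimates of exactly the kind used in the proof of \propref{Phi}, and saturation follows from the density \eqref{density} (equivalently \eqref{box dense}). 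Rieffel's theorem \cite[Corollary~1.7]{proper} then produces the $C^*(\BB*\Omega)\rtimes_{\alpha,r}G-C^*(\BB*\Omega)^\alpha$ imprimitivity bimodule $X_R$, again a completion of $\Gamma_c(\BB*\Omega)$.

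Finally, to produce the surjection, let $\Lambda$ be the regular representation of $C^*(\BB*\Omega)\rtimes_\alpha G$ and set $I=\ker\Lambda$, so that $(C^*(\BB*\Omega)\rtimes_\alpha G)/I=C^*(\BB*\Omega)\rtimes_{\alpha,r}G$. Running $I$ through the Rieffel ideal correspondence for $X$ produces an ideal $J$ of $C^*(\BB)$, and the quotient maps assemble into a surjection of imprimitivity bimodules from $X$ onto the $C^*(\BB*\Omega)\rtimes_{\alpha,r}G-(C^*(\BB)/J)$ imprimitivity bimodule $X/\overline{IX}$, with $\Upsilon$ the quotient map (which restricts to the identity on $\Gamma_c(\BB*\Omega)$). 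It remains to identify $C^*(\BB)/J$ with $C^*(\BB*\Omega)^\alpha$ and $X/\overline{IX}$ with $X_R$ compatibly, and I would do this on the common dense subspace $\Gamma_c(\BB*\Omega)$: the candidate right-hand map is the homomorphism $\Phi$ of \propref{Phi}, whose range is $\alpha$-invariant, and the crux is the pair of identities, for $a,b\in\Gamma_c(\BB*\Omega)$,
\[
\Phi\bigl(\langle a,b\rangle_{C^*(\BB)}\bigr)=\int_G\alpha_s(a^*b)\,ds
\midtext{and}
\Lambda\bigl({}_{C^*(\BB*\Omega)\rtimes_\alpha G}\langle a,b\rangle\bigr)(s)=a\,\alpha_s(b^*)
\]
(in the appropriate normalizations), which match the inner products of \cite{mw:fell} with those of \cite[Corollary~1.7]{proper}. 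Granting these, $\Phi$ descends to an isomorphism $C^*(\BB)/J\to C^*(\BB*\Omega)^\alpha$ and $\Upsilon$ to an isomorphism $X/\overline{IX}\to X_R$, so $(\Lambda,\Upsilon,\Phi)$ is the asserted surjection; the first identity together with \propref{Phi} also exhibits the quotient $C^*(\BB)\to C^*(\BB*\Omega)^\alpha$ as the regular representation of $C^*(\BB)=C^*(G\under(\BB*\Omega))$, as announced in \secref{main}. I expect this reconciliation of the two descriptions of the module structure on $\Gamma_c(\BB*\Omega)$ --- and with it the precise location of $C^*(\BB*\Omega)^\alpha$ inside $M(C^*(\BB*\Omega))$ --- to be the main obstacle, the properness and saturation of $\alpha$ being routine given the estimates already at hand.
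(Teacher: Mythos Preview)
Your approach is essentially the paper's: both take $\Upsilon$ to be the extension of the identity on $\Gamma_c(\BB*\Omega)$ and reduce the theorem to the pair of inner-product identities you wrote down (the paper's \eqref{left} and \eqref{right}), which are verified by direct computation. Two points of divergence are worth noting. First, for properness and saturation the paper does not argue directly as you propose; instead it observes that the $G$-equivariant nondegenerate embedding of $C_0(\XX^0)$ in $M(C^*(\BB*\Omega))$ lets one invoke \cite[Theorem~5.7]{integrable} and \cite[Lemma~4.1]{HRWsymmetric} wholesale. Your claim that saturation follows from \eqref{density} is not quite right as stated---saturation concerns density of left inner products in the \emph{crossed product}, not density in $C^*(\BB*\Omega)$---so the citation route is both cleaner and safer. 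Second, your detour through the Rieffel ideal correspondence (pushing $\ker\Lambda$ across $X$ to an ideal $J$ and then identifying $C^*(\BB)/J$) is unnecessary: once the two inner-product identities hold on $\Gamma_c(\BB*\Omega)$, the identity map extends to the desired bimodule surjection directly, and surjectivity of $\Phi$ onto $C^*(\BB*\Omega)^\alpha$ follows because both coefficient algebras are spanned by the respective right inner products. Finally, your closing remark that $\Phi$ ``exhibits the quotient $C^*(\BB)\to C^*(\BB*\Omega)^\alpha$ as the regular representation'' is not part of this theorem; in the paper that is a separate corollary (\corref{SW}) requiring the Sims--Williams equivalence theorem \cite{SimWilReducedFell}, and it does not fall out of the inner-product identity alone.
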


\begin{rem*}
  Obviously \thmref{Rieffel} implies \thmref{Rieffel transformation};
  to see that the two results are in fact equivalent, just use the
  principal-bundle decomposition
  \[
  \AA\cong \BB*\Omega
  \]
  from \cite[Theorem~A.11]{kmqw2}.
\end{rem*}

Our strategy for proving \thmref{Rieffel transformation} will be to
take $\Upsilon$ as a suitable extension of the identity map on
$\Gamma_c(\BB*\Omega)$.  This makes sense, since both imprimitivity
bimodules $X$ and $X_R$ are completions of $\Gamma_c(\BB*\Omega)$, in
the latter case because the action $\alpha$ of $G$ on
$C^*(\BB*\Omega)$ is saturated and proper with respect to the dense
*-subalgebra $\Gamma_c(\BB*\Omega)$.


We first verify that the homomorphism $\Phi$ from \propref{Phi} is the
one we want:

\begin{prop}\label{onto}
  With the hypotheses of \thmref{Rieffel transformation}, the
  homomorphism $\Phi:C^*(\BB)\to M(C^*(\BB*\Omega))$ of \propref{Phi}
  maps onto the generalized fixed-point algebra
  $C^*(\BB*\Omega)^\alpha$.
\end{prop}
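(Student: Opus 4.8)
The plan is to identify $C^*(\BB*\Omega)^\alpha$ with Rieffel's generalized fixed-point algebra, which is realized inside $M(C^*(\BB*\Omega))$ as a certain completion, and then to show that $\Phi(C^*(\BB))$ lands in it and exhausts it. First I would recall that, since the action $\alpha$ on $C^*(\BB*\Omega)$ is proper and saturated with respect to the dense $*$-subalgebra $\Gamma_c(\BB*\Omega)$, the fixed-point algebra $C^*(\BB*\Omega)^\alpha$ is by definition the closure in $M(C^*(\BB*\Omega))$ of the span of the elements $\langle a,b\rangle_D$ for $a,b\in\Gamma_c(\BB*\Omega)$, where $\langle\cdot,\cdot\rangle_D$ is the $C^*(\BB*\Omega)^\alpha$-valued inner product coming from the Rieffel construction; concretely $\langle a,b\rangle_D$ acts on $c\in\Gamma_c(\BB*\Omega)$ by $c\mapsto \int_G \alpha_t(a^* \,({}_t c))\,dt$ or the analogous averaging formula. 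The key structural input is that this $D$-valued inner product is $\alpha$-invariant and lands in the fixed points, and that (by saturation) its span is dense.

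Next I would compute $\langle a,b\rangle_D$ explicitly on sections and recognize it as $\Phi$ of something. The natural guess, given the identification $G\under\Omega\cong\YY^0$ and the formula \eqref{Phi def} with the groupoid-convolution kernel $f(x)a_1(x\inv y,u)$, is that for $a,b\in\Gamma_c(\BB*\Omega)$ the element $\langle a,b\rangle_D$ equals $\Phi(f)$ where $f\in\Gamma_c(\BB)$ is the section obtained by integrating the fiberwise products of $a$ and $b$ over the $G$-orbit directions — i.e., $f(x) = \int_G a_1(\,\cdot\,)^* b_1(\,\cdot\,)$ suitably interpreted, using that $G\under\Omega\cong\YY^0$ makes such an integral a well-defined section of $\BB$ over $\YY$. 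So the core computation is to verify the identity
\[
\langle a,b\rangle_D = \Phi\!\left(\int_G {}_t(a)^*\,b\; dt\,\Big|_{\text{restricted to }\BB\text{ over }\YY}\right),
\]
matching the convolution kernel of \eqref{Phi def} against the averaging-over-$G$ formula for $\langle a,b\rangle_D$ acting on an arbitrary $c\in\Gamma_c(\BB*\Omega)$. Once this identity is in hand, $\Phi(C^*(\BB))\supseteq\clspn\{\langle a,b\rangle_D\} = C^*(\BB*\Omega)^\alpha$ follows from saturation, and the reverse inclusion $\Phi(C^*(\BB))\subseteq C^*(\BB*\Omega)^\alpha$ follows because every such $\Phi(f)$ with $f\in\Gamma_c(\BB)$ is $\alpha$-invariant (the $G$-action on $\Omega$ commutes with the $\YY$-action, so \eqref{Phi def} produces $G$-invariant multipliers) and the span of the $\Phi(f)$ is dense in $\Phi(C^*(\BB))$; combined with the fact that these $\Phi(f)$ can be written as $D$-valued inner products, or at least are limits of them, this gives containment in the fixed-point algebra.

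The main obstacle I anticipate is the bookkeeping in the explicit matching of the two kernels: Rieffel's $D$-valued inner product is defined via an integral over $G$ and a priori lives in a completion of $\Gamma_c$ under a specific norm, while $\Phi$ is defined via a groupoid convolution integral over $\YY$, and reconciling these requires carefully using the identification $G\under\Omega\cong\YY^0$ to convert the $G$-integral into an integral over the $\YY$-fibers — essentially checking that the orbit map $\Omega\to\YY^0$ and the Haar system on $\YY$ are compatible with the chosen Haar measure on $G$ in the way the transformation Fell bundle construction of \cite[Section~A.1]{kmqw2} dictates. A secondary subtlety is confirming that $\Phi(f)$, which a priori is only a multiplier, actually lies in the fixed-point algebra rather than merely its multiplier algebra; this should follow from the density statement \eqref{density} in \propref{Phi} together with the fact that $\mu(C_0(\Omega))$-compressions of $\Phi(f)$ are genuine $D$-valued inner products, but the details of passing from multipliers to the algebra will need care. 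Aside from these points the argument is a direct comparison of formulas.
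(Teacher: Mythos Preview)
Your overall strategy matches the paper's: realize the generators of $C^*(\BB*\Omega)^\alpha$ as $\Phi$ of explicit sections in $\Gamma_c(\BB)$. The paper works with the conditional expectation $E$ (so that $C^*(\BB*\Omega)^\alpha=\overline{E(\Gamma_c(\BB*\Omega))}$) rather than the $D$-valued inner products, but since $\langle a,b\rangle_D=E(a^*b)$ this is only a cosmetic difference. The paper defines an explicit averaging map $\Psi:\Gamma_c(\BB*\Omega)\to\Gamma_c(\BB)$ by $\Psi(a)(y)=\int_G a_1(y,s^{-1}\cdot u)\,ds$ and proves the identity $E(a)=\Phi(\Psi(a))$; your ``$f_{a,b}$'' would be $\Psi(a^*b)$. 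So the inclusion $C^*(\BB*\Omega)^\alpha\subseteq\Phi(C^*(\BB))$ goes exactly as you outline.

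The gap is in the reverse inclusion $\Phi(C^*(\BB))\subseteq C^*(\BB*\Omega)^\alpha$. Your proposed argument does not work: $\alpha$-invariance of $\Phi(f)$ in $M(C^*(\BB*\Omega))$ is necessary but not sufficient for membership in the generalized fixed-point algebra (Rieffel's fixed-point algebra is strictly smaller than the invariant multipliers in general), and the compressions $\Phi(f)\mu(\phi)=f\boxtimes\phi$ are not $D$-valued inner products --- they are not even $\alpha$-invariant once $\phi$ has compact support. What is actually required is to show that the sections arising as $\Psi(a)$ (equivalently, your $f_{a,b}$) span an inductive-limit dense subspace of $\Gamma_c(\BB)$, so that $\Phi(C^*(\BB))=\overline{\Phi(\Psi(\Gamma_c(\BB*\Omega)))}=\overline{E(\Gamma_c(\BB*\Omega))}$. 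The paper does this with a Bruhat-function argument: given $y\in\YY$ and $c\in B(y)$, pick $b\in\Gamma_c(\BB)$ with $b(y)=c$, choose $u\in q^{-1}(s(y))$, and use Tietze to produce $g\in C_c(\Omega)$ with $\int_G g(s^{-1}\cdot u)\,ds=1$; then $\Psi(b\boxtimes g)(y)=c$, so $\Psi(\Gamma_c(\BB*\Omega))$ has full fibres and is therefore dense. You should replace your invariance/compression sketch with this density step.
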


\begin{proof}
  The proof will be rather long, and we break it into steps.

\begin{step}
  We first need to know that the generalized fixed-point algebra
  exists, and for the proof of \thmref{Rieffel transformation} we
  further want to know that this fixed-point algebra is Morita
  equivalent to the reduced crossed product; by
  \cite[Corollary~1.7]{proper}, we can accomplish this by showing that
  the action $\alpha$ of $G$ on $C^*(\BB*\Omega)$ is proper and
  saturated with respect to the dense *-subalgebra
  $\Gamma_c(\BB*\Omega)$.  Recall that there is a nondegenerate
  embedding of $C_0(\XX^0)$ in $M(C^*(\BB*\Omega))$ determined by
  \[
  (\phi\cdot a)(x)=\phi(r(x))a(x) \midtext{and} (a\cdot
  \phi)(x)=a(x)\phi(s(x))
  \]
  for $\phi\in C_0(\XX^0)$ and $a\in \Gamma_c(\BB*\Omega)$.  Moreover,
  this embedding is $G$-equivariant.  Therefore properness and
  saturatedness follow from \cite[Theorem~5.7]{integrable} and
  \cite[Lemma~4.1]{HRWsymmetric}.
\end{step}

\begin{step}
  For each $a\in \Gamma_c(\BB*\Omega)$, we will show that there exists
  a unique section $\Psi(a)\in \Gamma_c(\BB)$ such that
  \[
  \Psi(a)(y)=\int_G a_1(y,s\inv\cdot u)\,ds,
  \]
  where $u\in \Omega$ is any element satisfying $q(u)=s(y)$.

  It is clear that the value of the integral is a well-defined element
  of $B(y)$, because for any $u\in q\inv(s(y))$ the map $s\mapsto
  a_1(y,s\inv\cdot u)$ is in $C_c(G,B(y))$, and by left-invariance of
  the Haar measure on $G$ the value of the integral is independent of
  the choice of $u$. It is also clear that the integral is zero for
  $y$ outside the compact subset $q(\supp a)$ of $\YY$. It remains to
  see that $\Psi(a)$ is continuous.  For this purpose we first show
  continuity of the auxiliary function $\wilde\Psi(a):\XX\to
  \BB*\Omega$ defined by
  \[
  \wilde\Psi(a)(y,u)=\int_G \bigl(a_1(y,s\inv\cdot u),u\bigr)\,ds.
  \]
  Once we have shown this, it will be clear that in fact
  $\wilde\Psi(a)$ is a section of the bundle $p:\BB*\Omega\to \XX$,
  and that
  \[
  \wilde\Psi(a)_1(y,u)=\int_G a_1(y,s\inv\cdot u)\,ds.
  \]
  Since the function $\wilde\Psi(a)_1$ is obviously independent of the
  second variable, we will be able to conclude that
  \[
  \Psi(a)\circ q=\wilde\Psi(a),
  \]
  and hence that the function $\Psi(a)$ is continuous as well, because
  $q:\XX\to \YY$ is a quotient map.

  Fix $(y_0,u_0)\in \XX$, and choose a compact neighborhood $U$ of
  $(y_0,u_0)$ and then a function $g\in C_c(\XX)$ that is identially
  $1$ on $U$. Then $\wilde\Psi(a)=\wilde\Psi(a)g$ on $U$, so to show
  that $\tilde\Psi(a)$ is continuous at $(y_0,u_0)$ it suffices to
  show that $\wilde\Psi(a)g$ is continous.  Let $K=\supp g$, and
  define
  \[
  \Gamma_K(\BB*\Omega)=\{c\in \Gamma_c(\BB*\Omega):\supp c\subset K\},
  \]
  which is a Banach space with the sup norm.  Now define $\psi\in
  C_c(G,\Gamma_K(\BB*\Omega)$ by
  \[
  \psi(s)(y,u)=a(y,s\inv u)g(y,u).
  \]
  Then the integral $\int_G \psi(s)\,ds$ is norm-convergent in the
  Banach space $\Gamma_K(\BB*\Omega)$, and it is routine to check that
  it agrees with $\tilde\Psi(a)g$, and this completes Step~\thestep.
\end{step}

\begin{step}
  We recall that
  \[
  C^*(\BB*\Omega)^\alpha=\bar{E(\Gamma_c(\BB*\Omega))},
  \]
  where $E$ is the conditional expectation from \cite{cldx}, defined
  as follows: for $a\in \Gamma_c(\BB*\Omega)$, $E(a)$ is the unique
  element of $M(C^*(\BB*\Omega))$ such that for all $\omega\in
  C^*(\BB*\Omega)^*$ we have
  \[
  \omega(E(a))=\int_G \omega(\alpha_s(a))\,ds.
  \]
  We will show that if $a,b\in \Gamma_c(\BB*\Omega)$ then $E(a)b$
  coincides with the section in $\Gamma_c(\BB*\Omega)$ given by
  $\Phi\circ\Psi(a)b$.

  We first show that $E(a)b$ is an integral in $C^*(\BB*\Omega)$, more
  precisely
  \begin{equation}\label{integral}
    E(a)b=\int_G^{C^*(\BB*\Omega)} \alpha_s(a)b\,ds,
  \end{equation}
  where the superscript ``$C^*(\BB*\Omega)$'' on the integral sign
  indicates that this is a norm-convergent integral in
  $C^*(\BB*\Omega)$.  To verify \eqref{integral}, let $\omega\in
  C^*(\BB*\Omega)^*$, and define $b\cdot \omega\in C^*(\BB*\Omega)^*$
  by
  \[
  b\cdot \omega(a)=\omega(ab).
  \]
  Then
  \begin{align*}
    \omega\bigl(E(a)b\bigr) &=b\cdot\omega(E(a)) \\&=\int_G
    b\cdot\omega(\alpha_s(a))\,ds \\&=\int_G\omega(\alpha_s(a)b)\,ds
    \\&=\omega\left(\int_G^{C^*(\BB*\Omega)} \alpha_s(a)b\,ds\right),
  \end{align*}
  because $s\mapsto \alpha_s(a)b$ is in $C_c(G,C^*(\BB*\Omega))$.

  Our strategy is to identify the integral in \eqref{integral} with
  one in $\Gamma_c(\BB*\Omega)$.  Define $g:G\to \Gamma_c(\BB*\Omega)$
  by
  \[
  g(s)=\alpha_s(a)b.
  \]
  Then there exist compact sets $C\subset G$ and $K\subset\XX$ such
  that
  \[
  g(s)(x)=0\midtext{if}(s,x)\notin C\times K.
  \]
  We can view
  \[
  g\in C_c(G,\Gamma_K(\BB*\Omega)),
  \]
  and hence we can integrate this map, getting an element
  \[
  c:=\int_G^{\Gamma_K(\BB*\Omega)} g(s)\,ds
  \]
  of $\Gamma_K(\BB*\Omega)$.  Let $j:\Gamma_K(\BB*\Omega)\to A$ be the
  inclusion map.  Then $j$ is bounded, and we have
  \begin{align*}
    j(c) &=j\left(\int_G^{\Gamma_K(\BB*\Omega)} g(s)\,ds\right)
    \\&=\int_G^A j(g(s))\,ds \\&=E(a)b,
  \end{align*}
  showing that $E(a)b$ is the section in $\Gamma_c(\BB*\Omega)$ given
  by
  \[
  \bigl(E(a)b\bigr)(y,u)=\int_G \bigl(\alpha_s(a)b\bigr)(y,u)\,ds.
  \]

  To show that $E(a)b=\Phi\circ\Psi(a)b$, note that since evaluation
  at $(y,u)\in \XX$ is a continuous linear map of
  $\Gamma_K(\BB*\Omega)$ into the fibre $B(y)\times \{u\}$, it follows
  that
  \begin{align*}
    \bigl(E(a)b\bigr)_1(y,u) &=\int_G
    \bigl(\alpha_s(a)b\bigr)_1(y,u)\,ds \\&=\int_G\int_\YY
    \alpha_s(a)_1(x,x\inv y\cdot u)b_1(x\inv y,u)
    \,d\lambda^{r(y)}(x)\,ds \\&=\int_G\int_\YY a_1(x,s\inv\cdot x\inv
    y\cdot u)b_1(x\inv y,u) \,d\lambda^{r(y)}(x)\,ds
    \\&=\int_\YY\int_G a_1(x,s\inv\cdot x\inv y\cdot u)b_1(x\inv y,u)
    \,ds\,d\lambda^{r(y)}(x) \\&=\int_\YY \int_G a_1(x,s\inv\cdot
    x\inv y\cdot u)\,ds b_1(x\inv y,u)\,d\lambda^{r(y)}(x)
    \\&=\int_\YY \Psi(a)(x) b_1(x\inv y,u)\,d\lambda^{r(y)}(x)
    \\&=\bigl(\Phi(\Psi(a))b\bigr)_1(y,u).
  \end{align*}
  This completes Step~\thestep.
\end{step}

\begin{step}
  We show that $\Psi(\Gamma_c(\BB*\Omega))$ is inductive-limit dense
  in $\Gamma_c(\BB)$.

  Clearly $\Psi(\Gamma_c(\BB*\Omega))$ is a $C_0(\YY)$-module, so it
  suffices to show that its fibres are full, i.e., for $y\in\YY$ and
  $c\in B(y)$ we can find a section in $\Psi(\Gamma_c(\BB*\Omega))$
  whose value at $y$ is $c$.  Pick $b\in \Gamma_c(\BB)$ such that
  $b(y)=c$.  Next choose $u\in \Omega$ such that $q(u)=s(y)$, then
  choose a nonnegative function $g\in C_c(\Omega)$ such that
  \[
  \int_G g(s\inv\cdot u)\,ds=1.
  \]
  To see that such a $g$ exists, note that $s\mapsto s\inv\cdot u$ is
  a homeomorphism of $G$ onto the closed subset $G\cdot u$ of
  $\Omega$, and we can choose a nonnegative function $g_0\in
  C_c(G\cdot u)$ such that $\int_G g_0(s\inv\cdot x_0)\,ds=1$, and
  then use Tietze's theorem to extend $g_0$ to $g\in C_c(\Omega)$.
  Let $b\boxtimes g$ denote the element of $\Gamma_c(\BB*\Omega)$
  defined by
  \[
  (b\boxtimes g)(y,s)=(b(y)g(s),s).
  \]
  Then
  \begin{align*}
    \Psi(b\boxtimes g)(y) &=\int_G (b\boxtimes g)_1(y,s\inv\cdot
    u)\,ds \\&=\int_G b(y)g(s\inv\cdot u)\,ds \\&=b(y)\int_G
    g(s\inv\cdot u)\,ds \\&=b(y) \\&=c.
  \end{align*}
\end{step}

\begin{step}
  \propref{onto} now follows quickly from the above: we have a
  homomorphism $\Phi:C^*(\BB)\to M(C^*(\BB*\Omega))$ such that
  \[
  \Phi(\Psi(\Gamma_c(\BB*\Omega)))=E(\Gamma_c(\BB*\Omega)).
  \]
  Since $\Psi(\Gamma_c(\BB*\Omega))$ is dense in $C^*(\BB)$ via the
  composition of inclusions
  \[
  \Psi(\Gamma_c(\BB*\Omega))\hookrightarrow
  \Gamma_c(\BB)\hookrightarrow C^*(\BB),
  \]
  and since $E(\Gamma_c(\BB*\Omega))$ is dense in
  $C^*(\BB*\Omega)^\alpha$, we have
  $\Phi(C^*(\BB))=C^*(\BB*\Omega)^\alpha$.  \qedhere
\end{step}
\end{proof}


\begin{proof} [Proof of \thmref{Rieffel transformation}]
  We will show that the identity map on $\Gamma_c(\BB*\Omega)$ is
  compatible with the regular representation $\Lambda$ and the
  homomorphism $\Phi$ from \propref{Phi}.

  We begin by recalling the formulas associated with the imprimitivity
  bimodules $X$ and $X_R$.  For $X$, first recall the abstract
  formulas from \cite{mw:fell} for the imprimitivity bimodule
  associated to an equivalence bundle $\EE\to \Omega$ between Fell
  bundles $\BB\to \GG$ and $\CC\to \HH$:
  \begin{align*}
    (f\cdot \xi)(t)&=\int_\GG f(x)\cdot \xi(x\inv\cdot t)\,d\lambda_\GG^{\rho(t)}(x)\\
    {}_L\<\xi,\eta\>(x)&=\int_\HH {}_\BB\<\xi(x\cdot t\cdot h),\eta(t\cdot h)\>\,d\lambda_\HH^{\sigma(t)}(h)\\
    (\xi\cdot f)(t)&=\int_\HH \xi(t\cdot h)\cdot f(h\inv)\,d\lambda_\HH^{\sigma(t)}(h)\\
    \<\xi,\eta\>_R(h)&=\int_\GG \<\xi(x\inv\cdot t),\eta(x\inv\cdot
    t\cdot h\>_\CC\,d\lambda^{\rho(t)}(x),
  \end{align*}
  where in the second and fourth equations $t\in \Omega$ is any
  element satisfying $\rho(t)=s(x)$ and $\sigma(t)=r(h)$,
  respectively.

  In our context, we have an equivalence $\BB*\Omega\to \YY*\Omega$
  between Fell bundles $(\BB*\Omega)\rtimes G\to (\YY*\Omega)\rtimes
  G$ and $\BB\to \YY$. The left module action becomes
  \begin{align*}
    &(f\cdot \xi)(y,u) \\&\quad=\int_{(\YY*\Omega)\rtimes G}
    f(x,v,s)\cdot \xi\bigl((x,v,s)\inv)\cdot (y,u)\bigr)
    \,d\lambda_{(\YY*\Omega)\rtimes G}^{\rho(y,u)}(x,v,s)
    \\&\quad=\int_{(\YY*\Omega)\rtimes G} f(x,v,s)\cdot
    \xi\Bigl(\bigl(s\inv\cdot (x,v)\inv,s\inv\bigr)\cdot (y,u)\Bigr)
    \,d\lambda_{(\YY*\Omega)\rtimes G}^{(r(y),y\cdot u,e)}(x,v,s)
    \\&\quad=\int_{\YY*\Omega}\int_G f(x,v,s)\cdot
    \xi\Bigl(\bigl(s\inv\cdot (x\inv,x\cdot v),s\inv\bigr)\cdot
    (y,u)\Bigr) \,ds\,d\lambda_{\YY*\Omega}^{(r(y),y\cdot u)}(x,v)
    \\&\quad=\int_\YY\int_G f(x,x\inv y\cdot u,s)\cdot
    \xi\bigl((x\inv,s\inv\cdot x\cdot v,s\inv)\cdot (y,u)\bigr)
    \,ds\,d\lambda_\YY^{r(y)}(x) \\&\quad\hspace{1in}\text{because we
      must have $y\cdot u=x\cdot v$} \\&\quad=\int_\YY\int_G f(x,x\inv
    y\cdot u,s)\cdot \xi(x\inv y,s\inv\cdot u)
    \,ds\,d\lambda_\YY^{r(y)}(x).
  \end{align*}

  The left inner product becomes
  \begin{align*}
    &{}_L\<\xi,\eta\>(y,u,s) \\&\quad=\int_\YY {}_{(\BB*\Omega)\rtimes
      G}\Bigl\< \xi\bigl((y,u,s)\cdot (x,v)\cdot z\bigr),
    \eta\bigl((x,v)\cdot z\bigr) \Bigr\> \,d\lambda^{\sigma(x,v)}(z)
    \\&\quad\hspace{1in}\text{where $\rho(x,v)=s(y,u,s)$}
    \\&\quad=\int_\YY {}_{(\BB*\Omega)\rtimes G}\Bigl\<
    \xi\bigl((yx,s\cdot v)\cdot z\bigr), \eta(xz,z\inv\cdot v) \Bigr\>
    \,d\lambda^{s(x)}(z) \\&\quad=\int_\YY {}_{(\BB*\Omega)\rtimes
      G}\bigl\< \xi(yxz,z\inv\cdot s\cdot v), \eta(xz,z\inv\cdot v)
    \bigr\> \,d\lambda^{s(x)}(z) \\&\quad=\int_\YY
    {}_{(\BB*\Omega)\rtimes G}\bigl\< \xi(yxz,s\cdot z\inv\cdot v),
    \eta(xz,z\inv\cdot v) \bigr\> \,d\lambda^{s(x)}(z)
    \\&\quad=\int_\YY {}_{(\BB*\Omega)\rtimes G}\Bigl\<
    \bigl(\xi_1(yxz,s\cdot z\inv\cdot v),x\cdot z\inv\cdot v\bigr),
    \\&\quad\hspace{1in}\bigl(\eta_1(xz,z\inv\cdot v),z\inv\cdot
    v\bigr) \Bigr\> \,d\lambda^{s(x)}(z) \\&\quad=\int_\YY \bigl(
    \xi_1(yxz,s\cdot z\inv\cdot v)\eta_1(xz,z\inv\cdot v)^*,
    \\&\quad\hspace{1in}s\cdot p(\eta_1(xz,z\inv\cdot v))\cdot
    z\inv\cdot v,s \bigr) \,d\lambda^{s(x)}(z) \\&\quad=\int_\YY
    \bigl( \xi_1(yxz,s\cdot z\inv\cdot v)\eta_1(xz,z\inv\cdot v)^*,
    s\cdot xz\cdot z\inv\cdot v,s \bigr) \,d\lambda^{s(x)}(z)
    \\&\quad=\int_\YY \bigl( \xi_1(yxz,z\inv x\inv\cdot
    u)\eta_1(xz,s\inv\cdot z\inv x\inv\cdot u)^*, u,s \bigr)
    \,d\lambda^{s(x)}(z)
  \end{align*}
  because $\rho(x,v)=(r(x),x\cdot v,e)$ and $s(y,u,s)=(s(y),s\inv\cdot
  u,e)$, and thus
  \begin{align*}
    &{}_{(\BB*\Omega)\rtimes G}\<\xi,\eta\>_1(y,u,s) \\&\quad=\int_\YY
    \xi_1(yxz,z\inv x\inv\cdot u)\eta_1(xz,s\inv\cdot z\inv x\inv\cdot
    u)^*\,d\lambda^{s(x)}(z) \\&\quad=\int_\YY \xi_1(yz,z\inv\cdot
    u)\eta_1(z,s\inv\cdot z\inv\cdot u)^*\,d\lambda^{r(x)}(z)
    \\&\quad=\int_\YY \xi_1(yz,z\inv\cdot u)\eta_1(z,s\inv\cdot
    z\inv\cdot u)^*\,d\lambda^{s(y)}(z).
  \end{align*}

  The right module action becomes
  \begin{align*}
    (\xi\cdot f)(y,u) &=\int_\YY \xi\bigl((y,u)\cdot x\bigr)\cdot
    f(x\inv) \,d\lambda_\YY^{\sigma(y,u)}(x) \\&=\int_\YY
    \xi(yx,x\inv\cdot u)\cdot f(x\inv) \,d\lambda_\YY^{s(y)}(x),
  \end{align*}
  so
  \begin{align*}
    (\xi\cdot f)_1(y,u) &=\int_\YY \xi_1(yx,x\inv\cdot
    u)f(x\inv)\,d\lambda^{s(y)}(x).
  \end{align*}

  The right inner product becomes
  \begin{align*}
    \<\xi,\eta\>_R(y) &=\int_{(\YY*\Omega)\rtimes G} \Bigl\<
    \xi\bigl((x,u,s)\inv\cdot (z,v)\bigr), \\&\hspace{1in}
    \eta\bigl((x,u,s)\inv\cdot (z,v)\cdot y\bigr) \Bigr\>_\BB
    \,d\lambda^{\rho(z,v)}(x,u,s),
  \end{align*}
  where $(z,v)$ is any element of $\YY*\Omega$ such that
  $\sigma(z,v)=r(y)$.  Since $\sigma(z,v)=s(z)$, we can take
  $z=y\inv$.  We have
  \begin{align*}
    (x,u,s)\inv &=\bigl(s\inv\cdot (x,u)\inv,s\inv\bigr)
    \\&=\bigl(s\inv\cdot (x\inv,x\cdot u),s\inv\bigr)
    \\&=(x\inv,s\inv\cdot x\cdot u,s\inv),
  \end{align*}
  \begin{align*}
    (x\inv,s\inv\cdot x\cdot u,s\inv)\cdot (y\inv,v) &=(x\inv
    y\inv,s\inv\cdot v),
  \end{align*}
  \begin{align*}
    (x\inv y\inv,s\inv\cdot v)\cdot y &=(x\inv,y\inv\cdot s\inv\cdot
    v),
  \end{align*}
  and
  \begin{align*}
    \rho(y\inv,v)=(s(y),y\inv\cdot v,e),
  \end{align*}
  so we get
  \begin{align*}
    &\<\xi,\eta\>_R(y) \\&\quad=\int_{(\YY*\Omega)\rtimes G} \bigl\<
    \xi(x\inv y\inv,s\inv\cdot v), \\&\quad\hspace{1in}
    \eta(x\inv,y\inv\cdot s\inv\cdot v) \bigr\>_\BB
    \,d\lambda^{(s(y),y\inv\cdot v,e)}(x,u,s)
    \\&\quad\hspace{2in}\text{where $y\inv\cdot v=x\cdot u$}
    \\&\quad=\int_\YY\int_G \Bigl\< \bigl(\xi_1(x\inv y\inv,s\inv\cdot
    v),s\inv\cdot v\bigr), \\&\hspace{1in} \bigl(\eta(x\inv,y\inv\cdot
    s\inv\cdot v),y\inv\cdot s\inv\cdot v\bigr) \Bigr\>_\BB
    \,ds\,d\lambda^{s(y)}(x) \\&=\int_\YY\int_G \xi_1(x\inv
    y\inv,s\inv\cdot v)^* \eta(x\inv,y\inv\cdot s\inv\cdot v)
    \,ds\,d\lambda^{s(y)}(x) \\&=\int_\YY\int_G \xi_1(x\inv
    y\inv,s\inv\cdot yx\cdot u)^* \eta(x\inv,s\inv\cdot x\cdot u)
    \,ds\,d\lambda^{s(y)}(x).
  \end{align*}

  For $X_R$, recall the abstract formulas from \cite{proper} for the
  imprimitivity bimodule associated to an action $\alpha:G\to\aut A$
  (where $A$ is any $C^*$-algebra) that is saturated and proper with
  respect to a dense *-subalgebra $A_0$: for $f\in C_c(G,A)\subset
  A\rtimes_{\alpha,r} G$ and $\xi,\eta,\zeta\in A_0$ we have
  \begin{align*}
    f\cdot \xi&=\int_G f(s)\alpha_s(\xi)\,ds\\
    {}_{A\rtimes_{\alpha,r} G}\<\xi,\eta\>(s)&=\Delta(s)^{-1/2}\xi\alpha_s(\eta^*)\\
    \zeta\<\xi,\eta\>_{A^\alpha}&=\int_G \zeta\alpha_s(\xi^*\eta)\,ds,
  \end{align*}
  and of course the right module action of $A^\alpha$ on $A$ is given
  by right multiplication.

  In our context we have $A=C^*(\BB*\Omega)$ and
  $A_0=\Gamma_c(\BB*\Omega)$.  We take
  \[
  f\in C_c(G,\Gamma_c(\BB*\Omega))\subset
  \Gamma_c\bigl((\BB*\Omega)\rtimes G\bigr),
  \]
  and then the left module action becomes
  \begin{align*}
    &(f\cdot \xi)(y,u) \\&\quad=\int_G
    \bigl(f(s)\alpha_s(\xi)\bigr)(y,u)\,ds
    \\&\quad=\int_G\int_{\YY*\Omega}
    f(s)(x,v)\alpha_s(\xi)\bigl((x,v)\inv (y,u)\bigr)
    \,d\lambda_\YY^{r(y,u)}(x,v)\,ds \\&\quad=\int_G\int_\YY
    f(s)(x,v)\alpha_s(\xi)\bigl((x\inv,x\cdot v)(y,u)\bigr)
    \,d\lambda_\YY^{r(y)}(x)\,ds \\&\quad=\int_G\int_\YY
    f(s)(x,v)\alpha_s(\xi)(x\inv y,u) \,d\lambda_\YY^{r(y)}(x)\,ds
    \\&\quad=\int_G\int_\YY f(s)(x,v)\,s\cdot
    \Bigl(\xi\bigl(s\inv\cdot (x\inv,x\cdot v)\bigr)\Bigr)
    \,d\lambda_\YY^{r(y)}(x)\,ds \\&\quad=\int_G\int_\YY
    f(s)(x,v)\,s\cdot \bigl(\xi(x\inv,s\inv\cdot x\cdot v)\bigr)
    \,d\lambda_\YY^{r(y)}(x)\,ds \\&\quad=\int_G\int_\YY
    f(s)(x,v)\,s\cdot \bigl(\xi_1(x\inv,s\inv\cdot x\cdot
    v),s\inv\cdot x\cdot v\bigr) \,d\lambda_\YY^{r(y)}(x)\,ds
    \\&\quad=\int_G\int_\YY
    \bigl(f(s)_1(x,v),v\bigr)\bigl(\xi_1(x\inv,s\inv\cdot x\cdot
    v),x\cdot v\bigr) \,d\lambda_\YY^{r(y)}(x)\,ds
    \\&\quad=\int_G\int_\YY \bigl(f_1(x,v,s)\xi_1(x\inv,s\inv\cdot
    x\cdot v),x\cdot v\bigr) \,d\lambda_\YY^{r(y)}(x)\,ds,
  \end{align*}
  where the meaning of the notation $f_1$ is clear once we identify
  $f\in C_c(G,\Gamma_c(\BB*\Omega))$ with the corresponding section in
  $\Gamma_c((\BB*\Omega)\rtimes G)$, and thus
  \begin{align*}
    (f\cdot\xi)_1(y,u) &=\int_G\int_\YY
    f_1(x,v,s)\xi_1(x\inv,s\inv\cdot x\cdot
    v)\,d\lambda_\YY^{r(y)}(x)\,ds \\&=\int_G\int_\YY f_1(x,x\inv
    y\cdot u,s)\xi_1(x\inv,s\inv\cdot y\cdot
    u)\,d\lambda_\YY^{r(y)}(x)\,ds,
  \end{align*}
  because $y\cdot u=x\cdot v$.

  For the left inner product on $\Gamma_c(\BB*\Omega)\subset X_R$, if
  $\xi,\eta\in \Gamma_c(\BB*\Omega)$ then the inner product
  ${}_{C^*(\BB*\Omega)\rtimes_{\alpha,r} G}\<\xi,\eta\>$ lies in
  $C_c(G,\Gamma_c(\BB*\Omega))\subset \Gamma_c((\BB*\Omega)\rtimes
  G)$, and we have
  \begin{align*}
    &{}_{C^*(\BB*\Omega)\rtimes_{\alpha,r} G}\<\xi,\eta\>(y,u,s)
    \\&\quad={}_{C^*(\BB*\Omega)\rtimes_{\alpha,r}
      G}\<\xi,\eta\>(s)(y,u)
    \\&\quad=\Delta(s)^{-1/2}\bigl(\xi\alpha_s(\eta^*)\bigr)(y,u)
    \\&\quad=\Delta(s)^{-1/2}\int_{\YY*\Omega}
    \xi(x,v)\alpha_s(\eta^*)\bigl((x,v)\inv (y,u\bigr)
    \,d\lambda_{\YY*\Omega}^{r(y,u)}(x,v)
    \\&\quad=\Delta(s)^{-1/2}\int_{\YY*\Omega}
    \xi(x,v)\alpha_s(\eta^*)\bigl((x\inv,x\cdot v)(y,u\bigr)
    \,d\lambda_{\YY*\Omega}^{(r(y),y\cdot v)}(x,v)
    \\&\quad=\Delta(s)^{-1/2}\int_\YY \xi(x,v)\alpha_s(\eta^*)(x\inv
    y,u) \,d\lambda_\YY^{r(y)}(x) \\&\quad=\Delta(s)^{-1/2}\int_\YY
    \bigl(\xi_1(x,v),v\bigr)\bigl(\eta_1^*(x\inv y,s\inv\cdot
    u),u\bigr) \,d\lambda_\YY^{r(y)}(x)
    \\&\quad=\Delta(s)^{-1/2}\int_\YY
    \bigl(\xi_1(x,v),v\bigr)\bigl(\eta_1(y\inv x,s\inv\cdot x\inv
    y\cdot u)^*,u\bigr) \,d\lambda_\YY^{r(y)}(x)
  \end{align*}
  so
  \begin{align*}
    &{}_{C^*(\BB*\Omega)\rtimes_{\alpha,r} G}\<\xi,\eta\>_1(y,u,s)
    \\&\quad=\Delta(s)^{-1/2}\int_\YY \xi_1(x,x\inv y\cdot
    u)\eta_1(y\inv x,s\inv\cdot x\inv y\cdot u)^*\,d\lambda^{r(y)}(x),
  \end{align*}
  because $y\cdot u=x\cdot v$.

  For the right inner product, if $\xi,\eta,\zeta\in
  \Gamma_c(\BB*\Omega)$ then
  \begin{align*}
    &\bigl(\zeta\<\xi,\eta\>_{C^*(\BB*\Omega)^\alpha}\bigr)(y,u)
    \\&\quad=\int_G \bigl(\zeta\alpha_s(\xi^*\eta)\bigr)(y,u)\,ds
    \\&\quad=\int_G\int_{\YY*\Omega} \zeta(x,v)
    \alpha_s(\xi^*\eta)\bigl((x,v)\inv (y,u)\bigr)
    \,d\lambda^{r(y,u)}(x,v)\,ds \\&\quad=\int_G\int_{\YY*\Omega}
    \zeta(x,v) \alpha_s(\xi^*\eta)(x\inv y,u) \,d\lambda^{(r(y),y\cdot
      u)}(x,v)\,ds \\&\quad=\int_G\int_\YY \bigl(\zeta_1(x,v),v\bigr)
    \bigl((\xi^*\eta)_1(x\inv y,s\inv\cdot u),u\bigr)
    \,d\lambda^{r(y)}(x)\,ds,
  \end{align*}
  so
  \begin{align*}
    &\bigl(\zeta\<\xi,\eta\>_{C^*(\BB*\Omega)^\alpha}\bigr)_1(y,u)
    \\&\quad=\int_G\int_\YY \zeta_1(x,v) (\xi^*\eta)_1(x\inv
    y,s\inv\cdot u) \,d\lambda^{r(y)}(x)\,ds \\&\quad=\int_G\int_\YY
    \zeta_1(x,v) \int_\YY \xi_1(z\inv,s\inv\cdot x\inv y\cdot u)^*
    \eta_1(z\inv x\inv y,s\inv\cdot u)\,d\lambda^{r(x\inv y)}(z)
    \\&\quad\hspace{2in}\,d\lambda^{r(y)}(x)\,ds
    \\&\quad=\int_G\int_\YY\int_\YY \zeta_1(x,x\inv y\cdot u)
    \xi_1(z\inv,s\inv\cdot x\inv y\cdot u)^* \eta_1(z\inv x\inv
    y,s\inv\cdot u)
    \\&\quad\hspace{2in}\,d\lambda^{s(x)}(z)\,d\lambda^{r(y)}(x)\,ds.
  \end{align*}

  We will need to observe the following: for $f\in \Gamma_c(\BB)$ and
  $\xi\in \Gamma_c(\BB*\Omega)$ we have
  \begin{align*}
    \bigl(\xi\Phi(f)\bigr)_1(y,u)
    &=\bigl(\Phi(f^*)\xi^*\bigr)^*_1(y,u)
    \\&=\bigl(\Phi(f^*)\xi^*\bigr)_1(y\inv,y\cdot u)^*
    \\&=\left(\int_\YY f^*(x)\xi^*_1(x\inv y\inv,y\cdot
      u)\,d\lambda^{r(y\inv)}(x)\right)^* \\&=\left(\int_\YY
      f(x\inv)^*\xi_1(yx,x\inv\cdot u)^*\,d\lambda^{s(y)}(x)\right)^*
    \\&=\int_\YY \xi_1(yx,x\inv\cdot u)f(x\inv)\,d\lambda^{s(y)}(x).
  \end{align*}

  We now proceed to show that the identity map on
  $\Gamma_c(\BB*\Omega)$ extends to an imprimitivity-bimodule map
  $\Upsilon:X\to X_R$ (which will then be a surjection because
  $\Gamma_c(\BB*\Omega)$ is dense in $X_R$).  It suffices to show
  that, on generators in $\Gamma_c(\BB*\Omega)$, the maps $\Lambda$
  and $\Phi$ transport the inner products of the imprimitivity
  bimodule $X$ to those of $X_R$. That is, we must show that for
  $\xi,\eta\in \Gamma_c(\BB*\Omega)$ we have
  \begin{align}
    \label{left}
    \Lambda\bigl({}_{C^*(\BB*\Omega)\rtimes_\alpha
      G}\<\xi,\eta\>\bigr) &={}_{C^*(\BB*\Omega)\rtimes_{\alpha,r}
      G}\<\xi,\eta\>;
    \\
    \label{right}
    \Phi\bigl(\<\xi,\eta\>_{C^*(\BB)}\bigr)
    &=\<\xi,\eta\>_{C^*(\BB*\Omega)^\alpha}.
  \end{align}
  For \eqref{left}, first of all we have
  \begin{align*}
    &{}_{C^*((\BB*\Omega)\rtimes G)}\<\xi,\eta\>_1(y,u,s)
    \\&\quad=\int_\YY \xi_1(yz,z\inv\cdot u)\eta_1(z,s\inv\cdot
    z\inv\cdot u)^*\,d\lambda^{s(y)}(z) \\&\quad=\int_\YY
    \xi_1(z,x\inv y\cdot u)\eta_1(y\inv x,s\inv\cdot x\inv y\inv\cdot
    u)^*\,d\lambda^{r(y)}(x) \\&\quad\hspace{1in}\text{after
      substituting $x=yz$}.
  \end{align*}
  Recall from \cite[Theorem~7.1]{kmqw1} that we have an isomorphism
  \[
  C^*(\BB*\Omega)\rtimes_\alpha G\cong C^*((\BB*\Omega)\rtimes G),
  \]
  and we will actually blur the distinction between these two
  $C^*$-algebras, so that for $f\in \Gamma_c(\BB*\Omega)$ and $g\in
  C_c(G)$ the generator $i_{C^*(\BB*\Omega)}(f)i_G(g)$ of the crossed
  product $C^*(\BB*\Omega)\rtimes_\alpha G$ is identified with the
  element of $\Gamma_c((\BB*\Omega)\rtimes G)$ given by
  \[
  \bigl(i_{C^*(\BB*\Omega)}(f)i_G(g)\bigr)(x,u,t)
  =\bigl(f_1(x,u)g(t)\Delta(t)^{1/2},u,t\bigr).
  \]
  Thus ${}_{C^*((\BB*\Omega)\rtimes_\alpha G}\<\xi,\eta\>$ is the
  element of
  \begin{align*}
    C_c(G,\Gamma_c(\BB*\Omega)) &\subset C_c(G,C^*(\BB*\Omega))
    \\&\subset C^*(\BB*\Omega)\rtimes_\alpha G
  \end{align*}
  satisfying
  \begin{align*}
    &{}_{C^*((\BB*\Omega)\rtimes G)}\<\xi,\eta\>(s)_1(y,u)
    \\&\quad=\Delta(s)^{-1/2}\int_\YY \xi_1(x,x\inv y\cdot
    u)\eta_1(y\inv x,s\inv\cdot x\inv y\inv\cdot
    u)^*\,d\lambda^{r(y)}(x).
  \end{align*}
  On the other hand, we have
  \begin{align*} {}_{C^*(\BB*\Omega)\rtimes_{\alpha,r}
      G}\<\xi,\eta\>(s)
    &=\Delta(s)^{-1/2}\bigl(\xi\alpha_s(\eta)^*\bigr),
  \end{align*}
  so ${}_{C^*(\BB*\Omega)\rtimes_{\alpha,r} G}\<\xi,\eta\>$ is the
  element of
  \begin{align*}
    C_c(G,\Gamma_c(\BB*\Omega)) &\subset C_c(G,C^*(\BB*\Omega))
    \\&\subset C^*(\BB*\Omega)\rtimes_{\alpha,r} G
  \end{align*}
  satisfying
  \begin{align*}
    &{}_{C^*(\BB*\Omega)\rtimes_{\alpha,r} G}\<\xi,\eta\>(s)_1(y,u)
    \\&\quad=\Delta(s)^{-1/2} \int_\YY \xi_1(x,x\inv y\cdot
    u)\alpha_s(\eta)^*_1(x\inv y,u) \,d\lambda^{r(y)}(x)
    \\&\quad=\Delta(s)^{-1/2} \int_\YY \xi_1(x,x\inv y\cdot
    u)\alpha_s(\eta)_1(y\inv x,x\inv y\cdot u)^* \,d\lambda^{r(y)}(x)
    \\&\quad=\Delta(s)^{-1/2} \int_\YY \xi_1(x,x\inv y\cdot
    u)\eta_1(y\inv x,s\inv\cdot x\inv y\cdot u)^*
    \,d\lambda^{r(y)}(x).
  \end{align*}
  Therefore, since $\Lambda$ is the bounded extension of the identity
  map on $C_c(G,C^*(\BB*\Omega))$, we have verified \eqref{left}.

  For \eqref{right}, we will actually find it convenient to show that
  if $\zeta\in \Gamma_c(\BB*\Omega)$, then
  \[
  \zeta \Phi\bigl(\<\xi,\eta\>_{C^*(\BB)}\bigr)
  =\zeta\<\xi,\eta\>_{C^*(\BB*\Omega)^\alpha)}.
  \]
  The left side is the element of $\Gamma_c(\BB*\Omega)$ satisfying
  \begin{align*}
    &\Bigl(\zeta \Phi\bigl(\<\xi,\eta\>_{C^*(\BB)}\bigr)\Bigr)_1(y,u)
    \\&\quad=\int_\YY \zeta_1(yx,x\inv\cdot u)
    \<\xi,\eta\>_{C^*(\BB)}(x\inv) \,d\lambda^{s(y)}(x)
    \\&\quad=\int_\YY \zeta_1(yx,x\inv\cdot u) \int_\YY\int_G
    \xi_1(z\inv x,s\inv\cdot x\inv z\cdot w)^* \\&\quad\hspace{1in}
    \eta_1(z\inv,s\inv\cdot z\cdot w) \,ds\,d\lambda^{s(x\inv)}(z)
    \,d\lambda^{s(y)}(x) \\&\quad\hspace{2in}\text{where
      $\rho(w)=s(z)$; can take $w=z\inv\cdot u$}
    \\&\quad=\int_G\int_\YY\int_\YY \zeta_1(yx,x\inv\cdot u)
    \xi_1(z\inv x,s\inv\cdot x\inv\cdot u)^* \\&\quad\hspace{1in}
    \eta_1(z\inv,s\inv\cdot u) \,d\lambda^{r(x)}(z)
    \,d\lambda^{s(y)}(x)\,ds \\&\quad=\int_G\int_\YY\int_\YY
    \zeta_1(yx,x\inv\cdot u) \xi_1(z\inv x,s\inv\cdot x\inv\cdot u)^*
    \\&\quad\hspace{1in} \eta_1(z\inv,s\inv\cdot u)
    \,d\lambda^{s(y)}(z) \,d\lambda^{s(y)}(x)\,ds
    \\&\quad=\int_G\int_\YY\int_\YY \zeta_1(x,x\inv y\cdot u)
    \xi_1(z\inv y\inv x,s\inv\cdot x\inv y\cdot u)^*
    \\&\quad\hspace{1in} \eta_1(z\inv,s\inv\cdot u)
    \,d\lambda^{s(y)}(z) \,d\lambda^{r(y)}(x)\,ds
    \\&\quad=\int_G\int_\YY\int_\YY \zeta_1(x,x\inv y\cdot u)
    \xi_1(z\inv x,s\inv\cdot x\inv y\cdot u)^* \\&\quad\hspace{1in}
    \eta_1(z\inv y,s\inv\cdot u) \,d\lambda^{r(y)}(z)
    \,d\lambda^{r(y)}(x)\,ds.
  \end{align*}

  On the other hand, $\zeta\<\xi,\eta\>_{C^*(\BB*\Omega)^\alpha}$ is
  the element of $\Gamma_c(\BB*\Omega)$ satisfying
  \begin{align*}
    &\bigl(\zeta\<\xi,\eta\>_{C^*(\BB*\Omega)^\alpha}\bigr)_1(y,u)
    \\&\quad=\int_G \bigl(\zeta \alpha_s(\xi^*\eta)\bigr)_1(y,u)\,ds
    \\&\quad=\int_G\int_\YY \zeta_1(x,x\inv y\cdot u)
    \bigl(\alpha_s(\xi^*\eta)\bigr)_1(x\inv y,u)
    \,d\lambda^{r(y)}(x)\,ds \\&\quad=\int_G\int_\YY \zeta_1(x,x\inv
    y\cdot u) (\xi^*\eta)_1(x\inv y,s\inv\cdot u)
    \,d\lambda^{r(y)}(x)\,ds \\&\quad=\int_G\int_\YY \zeta_1(x,x\inv
    y\cdot u) \int_\YY \xi^*_1(z,z\inv x\inv y\cdot s\inv\cdot u)
    \\&\quad\hspace{1in} \eta_1(z\inv x\inv y,s\inv\cdot u)
    \,d\lambda^{r(x\inv y)}(z) \,d\lambda^{r(y)}(x)\,ds
    \\&\quad=\int_G\int_\YY\int_\YY \zeta_1(x,x\inv y\cdot u)
    \xi_1(z\inv,s\inv\cdot x\inv y\cdot u)^* \\&\quad\hspace{1in}
    \eta_1(z\inv x\inv y,s\inv\cdot u)
    \,d\lambda^{s(x)}(z)\,d\lambda^{r(y)}(x)\,ds
    \\&\quad=\int_G\int_\YY\int_\YY \zeta_1(x,x\inv y\cdot u)
    \xi_1(z\inv x,s\inv\cdot x\inv y\cdot u)^* \\&\quad\hspace{1in}
    \eta_1(z\inv y,s\inv\cdot u)
    \,d\lambda^{r(x)}(z)\,d\lambda^{r(y)}(x)\,ds
    \\&\quad=\int_G\int_\YY\int_\YY \zeta_1(x,x\inv y\cdot u)
    \xi_1(z\inv x,s\inv\cdot x\inv y\cdot u)^* \\&\quad\hspace{1in}
    \eta_1(z\inv y,s\inv\cdot u)
    \,d\lambda^{r(y)}(z)\,d\lambda^{r(y)}(x)\,ds.
  \end{align*}
  Therefore \eqref{right} holds, and we are done.
\end{proof}


Using a recent result of Sims and Williams, we can show that in
\thmref{Rieffel transformation} the surjection of $C^*(\BB)$ onto the
generalized fixed-point algebra can be identified with the regular
representation:

\begin{cor}\label{SW}
  Let $p:\BB\to\YY$ be a Fell bundle over a locally compact groupoid,
  and let $G$ be a locally compact group. Suppose that both $\YY$ and
  $G$ act on \(the left of\) a locally compact Hausdorff space
  $\Omega$, 
  that the action of $G$ is free and proper and commutes
  with the $\YY$-action, 
  and that the fibring map $\Omega\to \YY^0$ associated to the 
  $\YY$-action induces an identification of 
  $G\under \Omega$ with $\YY^0$,
  so that $G$ also acts freely and properly by
  automorphisms on the transformation Fell bundle $\BB*\Omega\to
  \YY*\Omega$, and we also have an associated action $\alpha:G\to\aut
  C^*(\BB*\Omega)$.  Then there is a unique isomorphism $\Xi$ making
  the diagram
  \begin{equation}\label{Xi}
    \xymatrix{
      &C^*(\BB) \ar[dl]_\Phi \ar[dr]^\Lambda
      \\
      C^*(\BB*\Omega)^\alpha \ar[rr]_-{\Xi}^-\cong
      &&C^*_r(\BB)
    }
  \end{equation}
  commute.
\end{cor}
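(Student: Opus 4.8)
The goal is to produce $\Xi$ by showing that $\Phi$ and $\Lambda$ have the same kernel in $C^*(\BB)$. Granting $\ker\Phi=\ker\Lambda$, the regular representation $\Lambda\colon C^*(\BB)\to C^*_r(\BB)$ factors through the surjection $\Phi\colon C^*(\BB)\to C^*(\BB*\Omega)^\alpha$ of \propref{onto} to give a $*$-isomorphism $\Xi\colon C^*(\BB*\Omega)^\alpha\to C^*_r(\BB)$ with $\Xi\circ\Phi=\Lambda$, and uniqueness of $\Xi$ is immediate because $\Phi$ is onto. So everything reduces to the kernel identification.

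To get $\ker\Phi=\ker\Lambda$ I would play off two surjections of imprimitivity bimodules that share a common left-hand leg, and then invoke the injectivity of the Rieffel correspondence. The first surjection is supplied by \thmref{Rieffel transformation} itself: for the $C^*(\BB*\Omega)\rtimes_\alpha G$--$C^*(\BB)$ imprimitivity bimodule $X$, a surjection of imprimitivity bimodules realizes the Rieffel correspondence on the kernels of its two coefficient maps (a standard fact), so the kernel $\ker\Lambda_{\mathrm{cp}}$ of the regular representation of the crossed product is exactly the ideal of $C^*(\BB*\Omega)\rtimes_\alpha G$ induced through $X$ from $\ker\Phi\subseteq C^*(\BB)$. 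The second is the recent equivalence theorem of Sims and Williams for \emph{reduced} Fell-bundle $C^*$-algebras, applied to the equivalence $\BB*\Omega$ between the transformation Fell bundle $(\BB*\Omega)\rtimes G$ and $\BB$ that underlies the One-Sided Action Theorem: it says that the same bimodule $X$ descends to a $C^*_r((\BB*\Omega)\rtimes G)$--$C^*_r(\BB)$ imprimitivity bimodule, and that the two regular representations together with this descent map form a surjection of imprimitivity bimodules. Hence the kernel of the regular representation of $C^*((\BB*\Omega)\rtimes G)$ is the ideal induced through $X$ from $\ker\Lambda\subseteq C^*(\BB)$.

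Now I would use the isomorphism $C^*((\BB*\Omega)\rtimes G)\cong C^*(\BB*\Omega)\rtimes_\alpha G$ of \cite[Theorem~7.1]{kmqw1}, under which the two occurrences of $X$ (with their left module structures) are identified — precisely the identification already being used in the proof of \thmref{Rieffel transformation}, where we ``blur the distinction'' between the two crossed products — and under which $\ker\Lambda_{\mathrm{cp}}$ matches the kernel of the regular representation of $C^*((\BB*\Omega)\rtimes G)$. Since the Rieffel correspondence attached to $X$ is a bijection between the ideal lattices of $C^*(\BB)$ and $C^*(\BB*\Omega)\rtimes_\alpha G$, the two ideals of $C^*(\BB)$ inducing this common ideal must agree, i.e.\ $\ker\Phi=\ker\Lambda$, and we are done.

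The conceptual content is carried entirely by the Sims--Williams theorem; the remaining work is bookkeeping, and the one place where I expect genuine care to be needed — the main obstacle — is verifying the compatibility claims of the previous paragraph: that the imprimitivity bimodule $X$ of \thmref{Rieffel transformation} coincides, as a bimodule with inner products, with the one Sims and Williams attach to the equivalence $\BB*\Omega$, and that \cite[Theorem~7.1]{kmqw1} upgrades to an isomorphism of \emph{reduced} algebras intertwining the regular representations. Both should be routine, since the bimodules in question are completions of $\Gamma_c(\BB*\Omega)$ under the same Muhly--Williams inner products recalled in the proof of \thmref{Rieffel transformation}, and compatibility of the crossed-product isomorphism with regular representations is standard.
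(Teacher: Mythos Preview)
Your proposal is correct and follows essentially the same route as the paper: both arguments reduce to showing $\ker\Phi=\ker\Lambda$ by observing that, via the imprimitivity bimodule $X$, each of these ideals induces the kernel of the regular representation of $C^*(\BB*\Omega)\rtimes_\alpha G\cong C^*((\BB*\Omega)\rtimes G)$ --- the first by \thmref{Rieffel transformation}, the second by the Sims--Williams equivalence theorem --- and then invoking injectivity of the Rieffel correspondence. The compatibility you flag as the ``main obstacle'' is exactly what the paper dispatches by citing \cite[Example~11]{SimWilReducedFell} for the identification $C^*_r((\BB*\Omega)\rtimes G)\cong C^*(\BB*\Omega)\rtimes_{\alpha,r} G$ intertwining the regular representations.
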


\begin{proof}
  By \cite[Theorem~14]{SimWilReducedFell} the kernels of the regular
  representations of $C^*((\BB*\Omega)\times G)$ and $C^*(\BB)$
  correspond via the imprimitivity bimodule $X$.  By
  \cite[Theorem~7.1]{kmqw1} we have
  \[
  C^*((\BB*\Omega)\times G)\cong C^*(\BB*\Omega)\rtimes_\alpha G,
  \]
  by \cite[Example~11]{SimWilReducedFell} we have
  \[
  C^*_r((\BB*\Omega)\times G)\cong C^*(\BB*\Omega)\rtimes_{\alpha,r}
  G,
  \]
  and the regular representations
  \begin{align*}
    \Lambda:C^*((\BB*\Omega)\times G)&\to C^*_r((\BB*\Omega)\times G)
    \\
    \Lambda:C^*(\BB*\Omega)\rtimes_\alpha G&\to
    C^*(\BB*\Omega)\rtimes_{\alpha,r} G
  \end{align*}
  correspond under these isomorphisms.  Thus the kernels of the
  regular representations of $C^*(\BB*\Omega)\rtimes_\alpha G$ and
  $C^*(\BB)$ correspond via $X$. But by \thmref{Rieffel
    transformation} the kernels of the regular representation of
  $C^*(\BB*\Omega)\rtimes_\alpha G$ and of $\Phi:C^*(\BB)\to
  C^*(\BB*\Omega)^\alpha$ also correspond via $X$, so the result
  follows.
\end{proof}

It is convenient to have the following alternative version of
\corref{SW}:

\begin{cor}\label{SWA}
  Let $p:\AA\to\XX$ be a Fell bundle over a locally compact groupoid,
  and let $G$ be a locally compact group.  Suppose that $G$ acts
  freely and properly on \(the left of\) $\AA$ by automorphisms, so
  that we also have an associated action $\alpha:G\to \aut C^*(\AA)$.
  Then there is a unique isomorphism $\Xi$ making the diagram
  \begin{equation}\label{Xi A}
    \xymatrix{
      &C^*(G\under\AA) \ar[dl]_\Phi \ar[dr]^\Lambda
      \\
      C^*(\AA)^\alpha \ar[rr]_-{\Xi}^-\cong
      &&C^*_r(G\under\AA)
    }
  \end{equation}
  commute.

\end{cor}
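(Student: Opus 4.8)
The plan is to deduce \corref{SWA} from \corref{SW} in exactly the way \thmref{Rieffel} was deduced from \thmref{Rieffel transformation}: via the principal-bundle decomposition \cite[Theorem~A.11]{kmqw2}.

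First I would apply \cite[Theorem~A.11]{kmqw2} to the free and proper action of $G$ on $\AA\to\XX$ by automorphisms. This produces a Fell bundle $\BB\to\YY$ over a locally compact groupoid, together with commuting actions of $\YY$ and $G$ on a locally compact Hausdorff space $\Omega$ with the $G$-action free and proper, such that the fibring map $\Omega\to\YY^0$ induces an identification $G\under\Omega\cong\YY^0$, and such that there is a $G$-equivariant isomorphism $\AA\cong\BB*\Omega$ covering an isomorphism $\XX\cong\YY*\Omega$. Thus the hypotheses of \corref{SW} hold; moreover, under $\AA\cong\BB*\Omega$ the associated action $\alpha\colon G\to\aut C^*(\AA)$ is carried to the action $\alpha\colon G\to\aut C^*(\BB*\Omega)$ of \corref{SW}, so $C^*(\AA)^\alpha\cong C^*(\BB*\Omega)^\alpha$.

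Next I would match the remaining vertices and the arrows of \eqref{Xi A} with those of \eqref{Xi}. The orbit Fell bundle satisfies $G\under\AA\cong G\under(\BB*\Omega)\cong\BB$ --- this identification is part of \cite[Theorem~A.11]{kmqw2} and is what makes the One-Sided Action Theorem and \thmref{Rieffel transformation} consistent --- so $C^*(G\under\AA)\cong C^*(\BB)$ and $C^*_r(G\under\AA)\cong C^*_r(\BB)$. Under these isomorphisms the homomorphism $\Phi$ appearing in \eqref{Xi A} is carried to the homomorphism $\Phi$ appearing in \eqref{Xi}, since in both cases $\Phi$ is the homomorphism of \propref{Phi} and is characterized on $\Gamma_c$-sections by the manifestly natural formula \eqref{Phi def}; and the regular representation $\Lambda$ in \eqref{Xi A} is carried to that in \eqref{Xi}, being in each case the canonical quotient onto the reduced algebra. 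Hence \eqref{Xi A} is just the diagram \eqref{Xi} transported along $\AA\cong\BB*\Omega$, and \corref{SW} supplies the unique isomorphism $\Xi$ making it commute.

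The only step requiring genuine care is the claim in the previous paragraph that the decomposition of \cite[Theorem~A.11]{kmqw2} intertwines the two copies of $\Phi$ and the two regular representations; I expect this to be the main (though routine) obstacle, since it is the place where the identification $G\under\AA\cong\BB$ is actually brought to bear. Everything else is formal transport of structure.
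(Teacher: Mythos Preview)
Your proposal is correct and matches the paper's (implicit) approach: the paper presents \corref{SWA} as an ``alternative version'' of \corref{SW} with no separate proof, relying on precisely the principal-bundle decomposition $\AA\cong\BB*\Omega$ of \cite[Theorem~A.11]{kmqw2} that was already invoked to pass between \thmref{Rieffel} and \thmref{Rieffel transformation}. Your careful identification of the maps $\Phi$ and $\Lambda$ under this decomposition makes explicit what the paper leaves to the reader.
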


\section{Applications}\label{applications}

\subsection{Coaction-crossed products}

Let $\BB\to G$ be a Fell bundle over a locally compact group.  Then by
\cite[Theorem~5.1]{kmqw1} we have an equivariant isomorphism
\[
\bigl(C^*(\BB\times G),\alpha\bigr)\cong \bigl(C^*(\BB)\rtimes_\delta
G,\what\delta\bigr),
\]
so the Rieffel Surjection \thmref{Rieffel transformation} in this
context can be expressed in the form
\[
(\Lambda,\Upsilon,\Phi): \bigl(C^*(\BB)\rtimes_\delta
G\rtimes_{\what\delta} G,X,C^*(\BB)\bigr) \to
\bigl(C^*(\BB)\rtimes_\delta G\rtimes_{\what\delta,r}
G,X_R,C^*_r(\BB)\bigr)
\]
Moreover, in this case we can identify the isomorphism
\[
\xymatrix@C+30pt{ \Xi:(C^*(\BB)\rtimes_\delta G)^{\what\delta}
  \ar[r]^-\cong &C^*_r(\BB) }
\]
of \eqref{Xi}: the Banach bundle $\BB\to G$ gives an equivalence
between the Fell bundles $\BB\times G\to G\times G$ and $B(e)\to
\{e\}$, and hence by the YMW Theorem we have a $C^*(\BB)\rtimes_\delta
G-B(e)$ imprimitivity bimodule $L^2(\BB)$, and hence an isomorphism
\[
\xymatrix{ C^*(\BB)\rtimes_\delta G \ar[r]^\varphi_\cong
  &\KK(L^2(\BB)).  }
\]

\begin{thm}\label{regular}
  With the above notation, the isomorphism $\Xi$ of \eqref{Xi} is the
  restriction to $(C^*(\BB)\rtimes_\delta G)^{\what\delta}$ of the
  canonical extension
  \[
  \bar\varphi:M\bigl(C^*(\BB)\rtimes_\delta G\bigr)\to \LL(L^2(\BB)).
  \]
\end{thm}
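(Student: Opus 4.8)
The plan is to identify all three maps in the diagram \eqref{Xi} (in the coaction setting) with concrete operators on $L^2(\BB)$ and then chase the diagram at the level of the dense subalgebra $\Gamma_c(\BB*\Omega) = \Gamma_c(\BB\times G)$. Here the space $\Omega$ is $G$, with $\YY = G$ acting by left translation and $G$ acting on the left by right translation (the standard principal-bundle picture of a group-over-group), so that $\YY*\Omega = G\times G$, $\BB*\Omega = \BB\times G$, the generalized fixed-point algebra is $(C^*(\BB)\rtimes_\delta G)^{\what\delta}$, and $C^*(G\under(\BB\times G)) \cong C^*(\BB)$. First I would recall, from \propref{Phi} and the explicit formula for the equivalence bundle $L^2(\BB)$ coming from the YMW Theorem, exactly what $\Phi: C^*(\BB)\to M(C^*(\BB\times G))$ does to a section $f\in\Gamma_c(\BB)$: by \eqref{Phi def} it acts on $\Gamma_c(\BB\times G)$ by a convolution-in-the-first-variable formula, and this should be recognizably the same formula by which $\bar\varphi(\Phi(f))$ acts on $L^2(\BB)$. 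Likewise I would recall that the regular representation $\Lambda:C^*(\BB)\to C^*_r(\BB)$ is, by definition, implemented on $L^2(\BB)$ — indeed $\varphi$ restricted (or rather its image) is built from exactly the left-regular-type representation — so $\Lambda$ already factors through $\bar\varphi$.

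The key steps, in order, are: (1) pin down the principal-bundle data realizing the group-over-group case and record the identifications $\BB*\Omega\cong\BB\times G$, $C^*(G\under\AA)\cong C^*(\BB)$; (2) write out $\bar\varphi$ on $M(C^*(\BB)\rtimes_\delta G)\cong M(C^*(\BB\times G))$ and compute $\bar\varphi(\Phi(f))$ on a section $\xi\in\Gamma_c(\BB)\subset L^2(\BB)$, getting a convolution formula; (3) using the description $C^*(\BB*\Omega)^\alpha = \overline{E(\Gamma_c(\BB\times G))}$ and the identity $E(a)b = \Phi(\Psi(a))b$ from Step 3 of the proof of \propref{onto}, together with the fact (\corref{SW}) that $\Xi$ is characterized by $\Xi\circ\Phi = \Lambda$, reduce the theorem to checking $\bar\varphi(\Phi(f)) = (\text{image of }\Lambda(f)\text{ in }\KK(L^2(\BB)))$ for $f\in\Gamma_c(\BB)$; (4) verify that last equality by a direct computation on $\Gamma_c(\BB)\subset L^2(\BB)$, comparing the convolution kernel from step (2) with the left-regular representation of $C^*(\BB)$ on $L^2(\BB)$. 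Since $\Gamma_c(\BB)$ is dense in $L^2(\BB)$ and $\Phi(C^*(\BB))$ is dense in $C^*(\BB*\Omega)^\alpha$ (by \propref{onto}), and $\bar\varphi$ is an isomorphism onto $\LL(L^2(\BB))$, this determines $\Xi$ uniquely and proves it is the asserted restriction of $\bar\varphi$.

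The main obstacle I anticipate is bookkeeping: reconciling the several coordinate conventions in play — the modular-function factors $\Delta(t)^{1/2}$ appearing in the identification $C^*(\BB\times G)\cong C^*(\BB)\rtimes_\delta G$ (used already in the proof of \thmref{Rieffel transformation}), the left/right Haar-system conventions in \eqref{Phi def}, and the normalization of the YMW equivalence bimodule $L^2(\BB)$ as it comes out of \cite{mw:fell}. One must be careful that the convolution formula for $\bar\varphi(\Phi(f))$ on $L^2(\BB)$ matches the \emph{left-regular} representation (the one defining $C^*_r(\BB)$ and hence $\Lambda$) rather than some twisted variant; getting the adjoints and the modular factors to line up is where the real work lies. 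A secondary point to be careful about is that $\bar\varphi$ is only a priori defined on the multiplier algebra, so one should confirm that $\Phi(f)$, although a multiplier of $C^*(\BB*\Omega)$, has $\bar\varphi$-image an honest compact operator — this follows because $\Phi(f)$ lies in $C^*(\BB*\Omega)^\alpha$ and the equivalence $L^2(\BB)$ carries $C^*(\BB*\Omega)^\alpha\cong$ (a subalgebra containing) $\KK(L^2(\BB))$ onto $\KK(L^2(\BB))$ — but in fact $\Xi$ being an isomorphism onto $C^*_r(\BB)\cong\KK(L^2(\BB))$ already records this, so once the formula is matched on $\Gamma_c(\BB)$ we are finished.
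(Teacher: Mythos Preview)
Your approach is essentially the paper's: both reduce to verifying $\bar\psi\circ\Phi(f)=\Lambda(f)$ for $f\in\Gamma_c(\BB)$ (where $\psi=\varphi\circ\theta^{-1}$, $\theta$ being the isomorphism $C^*(\BB)\rtimes_\delta G\cong C^*(\BB\times G)$) by a direct convolution computation on $L^2(\BB)$. The paper's one refinement, which neatly dispatches your multiplier-extension worry, is to test the identity on vectors of the form $\psi(g)\xi$ with $g\in\Gamma_c(\BB\times G)$ rather than directly on $\xi\in\Gamma_c(\BB)$: then $\bar\psi(\Phi(f))\psi(g)\xi=\psi(\Phi(f)g)\xi$ is an honest application of $\psi$, and \eqref{Phi def} plus a change of variables gives $\Lambda(f)\psi(g)\xi$ immediately. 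One small slip in your final paragraph: the parenthetical $C^*_r(\BB)\cong\KK(L^2(\BB))$ is not right --- it is $C^*(\BB)\rtimes_\delta G$ that is isomorphic to $\KK(L^2(\BB))$, while $C^*_r(\BB)$ sits inside $\LL(L^2(\BB))$ --- but this does not affect your argument.
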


\begin{proof}
  Let
  \[
  \xymatrix{ C^*(\BB)\rtimes_\delta G \ar[r]^\cong_\theta
    &C^*(\BB\times G) }
  \]
  be the isomorphism of \cite[Theorem~5.1]{kmqw1}, and let
  \[
  \psi=\varphi\circ\theta\inv:C^*(\BB\times G)\to \LL(L^2(\BB)).
  \]
  Since $\Phi(C^*(\BB))=C^*(\BB\times G)^\alpha$ and
  $\Lambda(C^*(\BB))=C^*_r(\BB)$, it suffices to show that the diagram
  \begin{equation}\label{Phi=Lambda2}
    \xymatrix{
      &C^*(\BB)
      \ar[dl]_\Phi
      \ar[dr]^\Lambda
      \\
      M(C^*(\BB\times G))
      \ar[rr]_-{\bar\psi}
      &&\LL(L^2(\BB))
    }
  \end{equation}
  commutes.  Let's recall that for $g\in \Gamma_c(\BB\times G)$ and
  $\xi\in \Gamma_c(\BB)$ we have
  \begin{align*}
    \bigl(\psi(g)\xi\bigr)(x) &=\int_{G\rtimes_{\lt} G}
    g_1(y,u)\xi\bigl((y,u)\inv\cdot x\bigr) \,d\lambda_{G\rtimes_{\lt}
      G}^{r(x)}(y,u) \\&=\int_G g_1(y,y\inv x)\xi(y\inv x)\,dy.
  \end{align*}

  It suffices to check commutativity of the diagram on functions $f\in
  \Gamma_c(\BB)$, and it suffices to check the values of
  $\bar\psi\circ\Phi(f)$ and $\Lambda(f)$ on vectors in $\ell^2(\BB)$
  of the form $\psi(g)\xi$ for $g\in \Gamma_c(\BB\times G)$ and
  $\xi\in \ell^2(\BB)$:
  \begin{align*}
    \bigl(\bar\psi(\Phi(f))\psi(g)\xi\bigr)(x)
    &=\Bigl(\psi\bigl(\Phi(f)g\bigr)\xi\Bigr)(x) \\&=\int_G
    \bigl(\Phi(f)g\bigr)_1(y,y\inv x)\xi(y\inv x)\,dy \\&=\int_G\int_G
    f(s)g_1(s\inv y,y\inv x)\,ds\xi(y\inv x)\,dy \\&=\int_G\int_G
    f(s)g_1(s\inv y,y\inv x)\xi(y\inv x)\,dy\,ds \\&=\int_G f(s)\int_G
    g_1(y,y\inv s\inv x)\xi(y\inv s\inv x)\,dy\,ds \\&=\int_G
    f(s)\bigl(\psi(g)\xi\bigr)(s\inv x)\,ds
    \\&=\bigl(\Lambda(f)\psi(g)\xi\bigr)(x).
  \end{align*}
  Thus \eqref{Phi=Lambda2} commutes.
\end{proof}

We can deduce from the above that, as one would expect, the regular
representation of $C^*(\BB)$ is a normalization:

\begin{cor}\label{normalization}
  Let $\BB\to G$ be a Fell bundle over a locally compact group, and
  let $\delta$ be the canonical coaction of $G$ on $C^*(\BB)$.  Then
  there is a unique coaction $\delta^n$ of $G$ on $C^*_r(\BB)$ such
  that the regular representation
  \[
  \xymatrix{ (C^*(\BB),\delta) \ar[r]^-\Lambda &(C^*_r(\BB),\delta^n).
  }
  \]
  is a normalization of $\delta$.
\end{cor}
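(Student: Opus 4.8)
The plan is to read the result off from \corref{SWA} together with the standard theory of normalizations of coactions. Recall that every coaction $(A,\delta)$ admits an essentially unique normalization: a normal coaction $(A^n,\epsilon)$ together with an equivariant surjection $q\colon A\to A^n$ inducing an isomorphism $q\rtimes G\colon A\rtimes_\delta G\to A^n\rtimes_\epsilon G$, and the kernel of $q$ is precisely the kernel of the canonical nondegenerate embedding $j_A\colon A\to M(A\rtimes_\delta G)$. So it is enough to prove, for $A=C^*(\BB)$ with $\delta$ the canonical coaction, that $\ker j_A=\ker\Lambda$: granting this, $q$ and $\Lambda$ are surjections with a common kernel, there is a unique isomorphism $\chi\colon A^n\to C^*_r(\BB)$ with $\chi\circ q=\Lambda$, and transporting $\epsilon$ along $\chi$ produces the required coaction $\delta^n$ on $C^*_r(\BB)$.

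To compute $\ker j_A$, I would invoke \cite[Theorem~5.1]{kmqw1}: the isomorphism $\theta\colon C^*(\BB)\rtimes_\delta G\to C^*(\BB\times G)$ carries $\what\delta$ to $\alpha$ and carries $j_{C^*(\BB)}$ to the homomorphism $\Phi$ of \propref{Phi} --- this is built into the construction of $\theta$, and in any case it is immediate to check on $\Gamma_c(\BB)$ --- so, $\bar\theta$ being an isomorphism of multiplier algebras, $\ker j_A=\ker\Phi$. On the other hand, \corref{SWA}, applied as in the application above with $\AA=\BB\times G$ (so that $C^*(\AA)\cong C^*(\BB)\rtimes_\delta G$, the $G$-action corresponds to $\what\delta$, and $G\under\AA\cong\BB$), supplies the commuting diagram \eqref{Xi A}, i.e.\ an isomorphism $\Xi\colon C^*(\BB\times G)^\alpha\to C^*_r(\BB)$ with $\Xi\circ\Phi=\Lambda$. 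Hence $\ker\Lambda=\ker\Phi=\ker j_A$, as wanted. (This also makes precise the assertion, implicit in the statement, that the Fell-bundle regular representation $\Lambda$ coincides with the regular representation of the coaction $\delta$.)

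It then only remains to check that $\delta^n:=(\chi\otimes\id)\circ\epsilon\circ\chi\inv$ makes $\Lambda=\chi\circ q$ a normalization, which is purely formal: equivariance of $\Lambda$ and bijectivity of $\Lambda\rtimes G$ transport along $\chi$ from the corresponding properties of $q$, and $\delta^n$ is normal because normality passes through the isomorphism $\chi$. Uniqueness of $\delta^n$ is automatic, since any coaction $\delta'$ on $C^*_r(\BB)$ for which $\Lambda$ is a normalization is in particular equivariant, so $\delta'\circ\Lambda=(\Lambda\otimes\id)\circ\delta=\delta^n\circ\Lambda$, and $\Lambda$ is onto. The main obstacle is really just the bookkeeping of the second paragraph --- reconciling the abstract normalization (the embedding $j_A$ and its kernel) with the concrete generalized-fixed-point-algebra picture of \corref{SWA}, in particular confirming that $\theta$ intertwines $j_{C^*(\BB)}$ with $\Phi$; once that is in place, everything else is formal.
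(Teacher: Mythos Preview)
Your proposal is correct and follows essentially the same route as the paper: both arguments reduce to showing that $\bar\theta\circ j_{C^*(\BB)}=\Phi$ (so $\ker j_{C^*(\BB)}=\ker\Phi$), then invoke the commuting triangle $\Xi\circ\Phi=\Lambda$ from \corref{SW}/\corref{SWA} to conclude $\ker\Lambda=\ker j_{C^*(\BB)}$, whence $\Lambda$ is a normalization. The only substantive difference is that the paper actually carries out the verification of $\bar\theta\circ j_{C^*(\BB)}=\Phi$ on generators (the long computation at the end of its proof), whereas you flag this as the ``main obstacle'' but defer it as immediate --- it is indeed routine, but not quite as automatic as ``built into the construction,'' so be prepared to write it out.
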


\begin{proof}
  We will show that the diagram
  \begin{equation}\label{j_B=Phi}
    \xymatrix{
      &C^*(\BB)
      \ar[dl]_{j_\BB}
      \ar[dr]^\Phi
      \\
      M\bigl(C^*(\BB)\rtimes_\delta G\bigr)
      \ar[rr]_-{\bar\theta}^-\cong
      &&
      M\bigl(C^*(\BB\times G)\bigr)
    }
  \end{equation}
  commutes.  It will then follow from \thmref{regular} that the
  diagram
  \[
  \xymatrix{ &C^*(\BB) \ar[dl]_{j_\BB} \ar[dr]^\Lambda
    \\
    j_\BB(C^*(\BB)) \ar[rr]_-{\Xi\circ\theta}^-\cong && C^*_r(\BB) }
  \]
  commutes.  Since there is a unique coaction $\ad j_G$ on
  $j_\BB(C^*(\BB))$ such that
  \[
  j_\BB:\bigl(C^*(\BB),\delta\bigr)\to \bigl(j_\BB(C^*(\BB)),\ad
  j_G\bigr)
  \]
  is a normalization, this will complete the proof.  The following
  computation implies that \eqref{j_B=Phi} commutes: for $f,g\in
  \Gamma_c(\BB)$, $h\in C_c(G)$, and $k\in \Gamma_c(\BB\times G)$ we
  have
  \begin{align*}
    &\biggl(\bar\theta\circ
    j_\BB(f)\Bigl(\theta\bigl(j_\BB(g)j_G(h)\bigr)*k\Bigr)\biggr)(s,t)
    \\&\quad=\Bigl(\theta\bigl(j_\BB(f)j_\BB(g)j_G(h)\bigr)*k\Bigr)(s,t)
    \\&\quad=\Bigl(\theta\bigl(j_\BB(f*g)j_G(h)\bigr)*k\Bigr)(s,t)
    \\&\quad=\Bigl(\bigl(\Delta^{1/2}(f*g)\boxtimes
    h\bigr)*k\Bigr)(s,t) \\&\quad=\int_G
    \bigl(\Delta^{1/2}(f*g)\boxtimes h\bigr)(r,r\inv st)k(r\inv
    s,t)\,dr \\&\quad=\int_G \Delta(r)^{1/2}(f*g)(r)h(r\inv st)k(r\inv
    s,t)\,dr \\&\quad=\int_G \Delta(r)^{1/2}\int_G f(u)g(u\inv
    r)\,du\,h(r\inv st)k(r\inv s,t)\,dr \\&\quad=\int_G f(u)\int_G
    \Delta(r)^{1/2}g(u\inv r)h(r\inv st)k(r\inv s,t)\,dr\,du
    \\&\quad=\int_G f(u)\int_G \Delta(r)^{1/2}g(r)h(r\inv u\inv
    st)k(r\inv u\inv s,t)\,dr\,du \\&\quad=\int_G
    f(u)\bigl((\Delta^{1/2}g\boxtimes h)*k\bigr)(u\inv s,t)\,du
    \\&\quad=\int_G
    f(u)\Bigl(\theta\bigl(j_\BB(g)j_G(h)\bigr)*k\Bigr)(u\inv s,t)\,du
    \\&\quad=\Bigl(\Phi(f)\Bigl(\theta\bigl(j_\BB(g)j_G(h)\bigr)*k\Bigr)\biggr)(s,t).
    \qedhere
  \end{align*}
\end{proof}

\begin{rem}
  We can interpret the above as confirmation that Katayama duality for
  normal coactions is a quotient of Katayama duality for maximal ones:
  $X$ can be viewed as a $C^*(\BB)\rtimes_\delta
  G\rtimes_{\what\delta} G-C^*(\BB)$ imprimitivity module, and $X_R$
  as a $C^*_r(\BB)\rtimes_{\delta^n} G\rtimes_{\what{\delta^n},r}
  G-C^*_r(\BB)$ imprimitivity module, and then the Rieffel Surjection
  $\Upsilon:X\to X_R$ of \thmref{Rieffel transformation} is compatible
  with the regular representations $C^*(\BB)\rtimes_\delta
  G\rtimes_{\what\delta} G\to C^*_r(\BB)\rtimes_{\delta^n}
  G\rtimes_{\what{\delta^n},r} G$ and $C^*(\BB)\to C^*_r(\BB)$.  This
  follows from \thmref{normalization}: we only need to observe the
  following equivariant isomorphisms:
  \begin{equation}\label{equivariant}
    \bigl(C^*(\BB\times G),\alpha\bigr)
    \cong \bigl(C^*(\BB)\rtimes_\delta G,\what\delta\bigr)
    \cong \bigl(C^*_r(\BB)\rtimes_{\delta^n} G,\what{\delta^n}\bigr),
  \end{equation}
  which pass to the crossed products, and hence to the reduced crossed
  products, and then apply \thmref{Rieffel transformation}.

  The isomorphisms \eqref{equivariant} imply the known result (see,
  e.g., \cite[Theorem~4.1]{cldx})
  \begin{equation}\label{fixed}
    j_\BB(C^*(\BB))=\bigl(C^*(\BB)\rtimes_\delta G\bigr)^{\what\delta},
  \end{equation}
  and hence we have a commutative diagram
  \begin{equation}\label{j_B=Lambda}
    \xymatrix{
      &C^*(\BB)
      \ar[dl]_{j_\BB}
      \ar[dr]^\Lambda
      \\
      \bigl(C^*(\BB)\rtimes_\delta G\bigr)^{\what\delta}
      \ar[rr]_-{\Xi\circ\theta}^-\cong
      &&
      C^*_r(\BB)
    }
  \end{equation}
\end{rem}

\subsection{Actions}

The following corollary appears to be new in its full generality; it
is certainly well-known in the special case that $G$ is compact.
Also, the case $A=\C$ (and arbitrary $G$) is
\cite[Example~2.1]{proper}.  Echterhoff and Emerson prove a special
case \cite[Theorem~2.14]{EchterhoffEmerson} where $A$ is fibered over
a proper $G$-space.  Our techniques do not require any hypotheses on
the action of $G$ on $A$.

\begin{cor}\label{fixed=regular}
  If $\beta:G\to \aut A$ is an action on a $C^*$-algebra, then the
  tensor-product action $\beta\otimes\ad\rho$ of $G$ on $A\otimes
  \KK(L^2(G))$ is saturated and proper in Rieffel's sense, and the
  generalized fixed point algebra $(A\otimes
  \KK(L^2(G)))^{\beta\otimes\ad\rho}$ is isomorphic to the reduced
  crossed product $A\rtimes_{\beta,r} G$.
\end{cor}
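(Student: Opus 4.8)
The plan is to obtain this as an instance of \thmref{Rieffel transformation} and \corref{SW}, applied to the semidirect-product Fell bundle $\BB:=A\times_\beta G$ over the group $G$, with $\Omega$ chosen so that $\BB*\Omega$ is the Fell bundle $\BB\times G$ figuring in the Coaction-crossed-products subsection above (concretely, $\YY=G$ and $\Omega=G$, with $\YY$ acting by left translation and the group $G$ acting by $s\cdot t=ts\inv$). For this $\BB$ one has $C^*(\BB)=A\rtimes_\beta G$, $C^*_r(\BB)=A\rtimes_{\beta,r}G$, and the canonical coaction $\delta$ of $G$ on $C^*(\BB)$ is the dual coaction $\what\beta$. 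The hypotheses of \thmref{Rieffel transformation} are satisfied here --- this is exactly what underlies the Rieffel Surjection as displayed in that subsection --- so there is an associated action $\alpha\colon G\to\aut C^*(\BB*\Omega)$, which by Step~1 of the proof of \propref{onto} is saturated and proper with respect to the dense $*$-subalgebra $\Gamma_c(\BB*\Omega)$, and \corref{SW} (equivalently \corref{SWA}) supplies an isomorphism $C^*(\BB*\Omega)^\alpha\cong C^*_r(\BB)=A\rtimes_{\beta,r}G$.

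What remains --- and this is the crux --- is to identify the pair $(C^*(\BB*\Omega),\alpha)=(C^*(\BB\times G),\alpha)$ with $(A\otimes\KK(L^2(G)),\beta\otimes\ad\rho)$. By \cite[Theorem~5.1]{kmqw1} there is a $G$-equivariant isomorphism $(C^*(\BB\times G),\alpha)\cong(C^*(\BB)\rtimes_\delta G,\what\delta)$, which here reads $\bigl((A\rtimes_\beta G)\rtimes_{\what\beta}G,\,\what{\what\beta}\,\bigr)$. By Imai--Takai duality, $(A\rtimes_\beta G)\rtimes_{\what\beta}G\cong A\otimes\KK(L^2(G))$ via an isomorphism carrying $\what{\what\beta}$ to $\beta\otimes\ad\rho$; the underlying $C^*$-isomorphism can alternatively be extracted from the YMW route used in the previous subsection, since for $\BB=A\times_\beta G$ the imprimitivity bimodule $L^2(\BB)$ is $L^2(G,A)$ and $\KK(L^2(G,A))\cong A\otimes\KK(L^2(G))$. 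An equivariant $C^*$-isomorphism carries $\Gamma_c(\BB*\Omega)$ onto a dense $*$-subalgebra of $A\otimes\KK(L^2(G))$ and identifies Rieffel's generalized fixed-point algebras attached to these two dense subalgebras, so transporting the two conclusions of the first paragraph across the composite isomorphism yields precisely the assertions of the corollary: $\beta\otimes\ad\rho$ is saturated and proper, and $(A\otimes\KK(L^2(G)))^{\beta\otimes\ad\rho}\cong A\rtimes_{\beta,r}G$.

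I expect the only genuine obstacle to be the bookkeeping just indicated: one must be certain that the action $\alpha$ delivered by \thmref{Rieffel transformation} really does go over to the bidual action $\what{\what\beta}$ under \cite[Theorem~5.1]{kmqw1}, and then that $\what{\what\beta}$ matches $\beta\otimes\ad\rho$ rather than, say, $\beta\otimes\ad\lambda$ --- this amounts to tracing the identifications through the Fell-bundle/coaction duality machinery and pinning down the conventions for the regular representations. Everything else is either a citation (existence and basic properties of the generalized fixed-point algebra, via Step~1 of \propref{onto}) or the routine observation that saturatedness and properness, together with the accompanying dense subalgebra, are preserved under an equivariant isomorphism of $C^*$-algebras.
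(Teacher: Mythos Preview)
Your proposal is correct and follows essentially the same route as the paper: specialize to the semidirect-product Fell bundle $\BB=A\rtimes G$, use the equivariant identification $(C^*(\BB\times G),\alpha)\cong(C^*(\BB)\rtimes_\delta G,\what\delta)=\bigl(A\rtimes_\beta G\rtimes_{\what\beta}G,\what{\what\beta}\,\bigr)$, invoke \corref{SW} (equivalently diagram~\eqref{j_B=Lambda}) to obtain the fixed-point algebra as $C^*_r(\BB)=A\rtimes_{\beta,r}G$, and finish with Imai--Takai duality. The one place where the paper is more careful than your sketch is the ``routine'' transport of the generalized fixed-point algebra across an equivariant isomorphism: Rieffel's construction depends on the choice of dense $*$-subalgebra, and the paper handles this not by tracking the image of $\Gamma_c(\BB*\Omega)$ but by noting that both sides use the canonical subalgebra determined by the equivariant embedding of $C_0(G)$ into the multipliers, then citing \cite[Proposition~2.6]{cldx} (with the correction in \cite[Proposition~2.4]{aHKRWFixed}) to justify the identification.
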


\begin{proof}
  In diagram~\eqref{j_B=Lambda}, we take the Fell bundle $\BB\to G$ to
  be the semidirect-product bundle
  \[
  A\rtimes G\to G.
  \]
  Then we have an equivariant isomorphism
  \[
  \bigl(C^*(\BB),\delta\bigr)\cong \bigl(A\rtimes_\beta
  G,\what\beta\bigr).
  \]

  Thus we have a commutative diagram
  \[
  \xymatrix{ &A\rtimes_\beta G \ar[dl]_{j_{A\rtimes_\beta G}}
    \ar[dr]^\Lambda
    \\
    \bigl(A\rtimes_\beta G\rtimes_{\what\beta}
    G\bigr)^{\what{\what\beta}} \ar[rr]_-{\Xi\circ\theta}^-\cong &&
    A\rtimes_{\beta,r} G.  }
  \]
  The result now follows from the equivariant isomorphism of
  Imai-Takai duality:
  \[
  \bigl(A\rtimes_\beta G\rtimes_{\what\beta}
  G,\what{\what\beta}\,\bigr) \cong \bigl(A\otimes
  \KK(L^2(G)),\beta\otimes \ad\rho\bigr).
  \]
  Note: there is a subtlety here: we have freely passed from
  equivariant isomorphism between proper and saturated actions to an
  isomorphism between the generalized fixed-point algebras; but
  Rieffel's generalized fixed-point algebras depend upon the choice of
  a suitable dense *-subalgebra. However, there is no problem in our
  case, because we always choose the ``canonical'' subalgebra
  associated to the obvious nondegenerate equivariant homomorphism of
  $C_0(G)$ into the multiplier algebra; then the isomorphsims follow
  from \cite[Proposition~2.6]{cldx}, modulo the correction in
  \cite[Proposition~2.4]{aHKRWFixed}.
\end{proof}

\subsection{$C^*$-bundles}

Here we specialize to the case where $\XX=X$ is a \emph{space} and
$\AA$ is just a $C^*$-bundle over $X$, so that
$C^*(\AA)=\Gamma_0(\AA)$.  Then the orbit bundle is the $C^*$-bundle
$\BB\to Y$, where $Y=G\under X$.

\begin{prop}\label{C*bundle}
  If a group $G$ acts freely and properly on a $C^*$-bundle $\AA\to X$
  over a space $X$, then the surjection
  \[
  \Phi:C^*(G\under\AA)\to C^*(\AA)^\alpha
  \]
  from \thmref{Rieffel} is an isomorphism.
\end{prop}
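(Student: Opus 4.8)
The plan is to show that the surjection $\Phi\colon C^*(G\under\AA)\to C^*(\AA)^\alpha$ of \thmref{Rieffel} is injective when $\XX=X$ is a space. By \corref{SWA}, $\Phi$ is (up to the canonical isomorphism $\Xi$) the regular representation $\Lambda\colon C^*(G\under\AA)\to C^*_r(G\under\AA)$. So it suffices to prove that the regular representation of $C^*(\BB)$ is faithful, where $\BB\to Y$ is the orbit $C^*$-bundle over the space $Y=G\under X$. But a $C^*$-bundle over a locally compact Hausdorff space is a Fell bundle over the (trivial, i.e.\ unit-space) groupoid $Y$; such a groupoid is amenable, so its full and reduced $C^*$-algebras coincide and $\Lambda$ is an isomorphism. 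Concretely, $C^*(\BB)=\Gamma_0(\BB)=C^*_r(\BB)$ since for a bundle of $C^*$-algebras over a space there is only one reasonable norm on $\Gamma_c(\BB)$ — the sup norm — and this is visibly what both the full and reduced completions produce (see \cite[Example~11]{SimWilReducedFell}, applied with the unit groupoid).

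The steps, in order, are: (1) Invoke \corref{SWA} to replace $\Phi$ by $\Lambda\colon C^*(G\under\AA)\to C^*_r(G\under\AA)$ via the isomorphism $\Xi$, so that injectivity of $\Phi$ is equivalent to faithfulness of $\Lambda$. (2) Identify $G\under\AA$ with a $C^*$-bundle $\BB$ over the space $Y=G\under X$; since $X$ is a space and the $G$-action is free and proper, the orbit space $Y$ is again a locally compact Hausdorff space and $G\under\AA\to Y$ is a $C^*$-bundle, so $C^*(\BB)=\Gamma_0(\BB)$. (3) Observe that a $C^*$-bundle over a space is a Fell bundle over the associated co-trivial (unit-space) groupoid, which is amenable, whence the regular representation $\Lambda\colon C^*(\BB)\to C^*_r(\BB)$ is an isomorphism; equivalently, cite \cite[Example~11]{SimWilReducedFell} or the elementary fact that $\Gamma_c(\BB)$ carries a unique $C^*$-norm when the base is a space. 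Combining (1)–(3), $\Phi$ is an isomorphism.

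I expect the only substantive point to be step (3): one must make sure that the notion of ``reduced $C^*$-algebra'' being used for $C^*_r(G\under\AA)$ in \corref{SWA} — namely the one coming from \cite{SimWilReducedFell} for Fell bundles over groupoids — really does reduce, for a bundle over a trivial groupoid (a space), to $\Gamma_0$ of the bundle, with no collapsing of the norm. This is where one invokes that unit-space groupoids are (trivially) amenable, or directly that the regular representation of $\Gamma_c(\BB)$ on the Hilbert module $L^2$ of the bundle recovers the sup norm fiberwise. Steps (1) and (2) are formal. Everything else follows at once from the results already established.
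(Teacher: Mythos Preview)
Your proof is correct, but it takes a different route from the paper's.

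The paper argues directly: in the setting of \thmref{Rieffel transformation}, when $\YY$ is a space the formula \eqref{Phi def} for $\Phi(f)$ collapses to pointwise multiplication by the bounded section $x\mapsto \bigl(f(q(x)),x\bigr)$ of $\BB*X$. Since $\Gamma_b(\BB*X)$ embeds isometrically in $M(\Gamma_0(\BB*X))$, this shows that $\Phi$ is isometric on $\Gamma_c(\BB)$, hence injective on $C^*(\BB)=\Gamma_0(\BB)$.

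Your argument instead invokes \corref{SWA} to identify $\ker\Phi$ with $\ker\Lambda$, and then observes that for a Fell bundle over a unit-space groupoid the full and reduced $C^*$-algebras coincide (both are $\Gamma_0(\BB)$), so $\Lambda$ is injective. This is legitimate and not circular: \corref{SWA} depends only on \thmref{Rieffel transformation} and on \cite{SimWilReducedFell}, not on \propref{C*bundle}. The trade-off is that your proof leans on the Sims--Williams machinery (via \corref{SWA}), whereas the paper's proof is elementary and self-contained---just a direct computation with the map $\Phi$. On the other hand, your approach makes explicit the conceptual point that the Rieffel surjection is ``really'' a regular representation, and that the obstruction to injectivity is exactly non-amenability of the orbit groupoid.

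One small caveat: your citation of \cite[Example~11]{SimWilReducedFell} is not quite on target---that example treats semidirect products, not unit groupoids. The fact you actually need (that $C^*(\BB)=C^*_r(\BB)=\Gamma_0(\BB)$ for a $C^*$-bundle over a space) is elementary and does not require any external citation; you might simply note that a unit-space groupoid has trivial Haar system, so convolution is pointwise multiplication and both completions of $\Gamma_c(\BB)$ yield the sup norm.
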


\begin{proof}
  With the notation used in \thmref{Rieffel transformation}, the
  groupoid $\YY=G\under\XX$ coincides with its unit space
  $Y=\YY^0=\YY$, which of course acts trivially on the space
  $X=\XX^0=\XX$, consequently the transformation groupoid $\YY*\XX^0$
  can be identified with $X$. The transformation bundle
  $\AA=\BB*\XX^0$ can be identified with $\BB*X$, and every section
  $a\in \Gamma_c(\BB*X)$ is of the form
  \[
  a(x)=\bigl(a_1(q(x)),x\bigr),
  \]
  where $a_1\in \Gamma_c(\BB)$.  For $f\in \Gamma_c(\BB)$ and $a\in
  \Gamma_c(\BB*X)$ we have
  \begin{align*}
    \bigl(\Phi(f)a\bigr)_1(x) &=f(q(x))a_1(x),
  \end{align*}
  so
  \begin{align*}
    \bigl(\Phi(f)a\bigr)(x) &=\bigl(f(q(x))a_1(x),x\bigr)
    \\&=\bigl(f(q(x)),x\bigr)\bigl(a_1(x),x\bigr) \\&=q^*(f)(x)a(x),
  \end{align*}
  where we define $q^*(f)\in \Gamma_b(\BB*X)$
  \[
  \bigl(\Phi(f)a\bigr)(x)=\bigl(f(q(x)),x\bigr).
  \]
  Thus $\Phi(f)$ acts on $\Gamma_c(\BB*X)$ by pointwise multiplication
  by the continuous bounded section $\Phi(f)\in \Gamma_b(\BB*X)$ given
  by
  \[
  \Phi(f)(x)=\bigl(f(q(x)),x\bigr).
  \]
  Since $\Gamma_b(\BB*X)$ embeds isometrically into the multiplier
  algebra $M(\Gamma_0(\BB*X))$, it follows that $\Phi:\Gamma_c(\BB)\to
  M(\Gamma_0(\BB*X))$ is isometric, and hence the extension to
  $\Gamma_0(\BB)=C^*(\BB)$ is an isomorphism onto its image
  $C^*(\BB*X)^\alpha$.
\end{proof}

\propref{C*bundle} and \thmref{Rieffel} immediately imply the
following corollary, which is surely folklore, although we could not
find it in the literature:

\begin{cor}\label{C*bundle cor}
  If a group $G$ acts freely and properly on a $C^*$-bundle $\AA\to X$
  over a space $X$, then the regular representation
  \[
  \Lambda:\Gamma_0(\AA)\rtimes_\alpha G\to
  \Gamma_0(\AA)\rtimes_{\alpha,r} G
  \]
  is an isomorphism.
\end{cor}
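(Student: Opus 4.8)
The plan is to read the corollary off directly from \thmref{Rieffel} together with \propref{C*bundle}. Specializing \thmref{Rieffel} to the present $C^*$-bundle setting gives a surjection of imprimitivity bimodules
\[
(\Lambda,\Upsilon,\Phi)\colon\bigl(\Gamma_0(\AA)\rtimes_\alpha G,\,X,\,C^*(G\under\AA)\bigr)\longrightarrow\bigl(\Gamma_0(\AA)\rtimes_{\alpha,r} G,\,X_R,\,C^*(\AA)^\alpha\bigr),
\]
in which $\Lambda$ is the regular representation and $\Phi$ is the quotient map onto the generalized fixed-point algebra. The key general fact I would use is that a surjection of imprimitivity bimodules is, up to isomorphism, the quotient of the top bimodule by a pair of ideals---one in each coefficient algebra---which correspond under the Rieffel correspondence determined by $X$; here that pair is $(\ker\Lambda,\ker\Phi)$, and the Rieffel correspondence is a lattice isomorphism carrying $\{0\}$ to $\{0\}$. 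Equivalently and more concretely: the left action of $\Gamma_0(\AA)\rtimes_{\alpha,r} G$ on $X_R$ is faithful, and $\Upsilon\colon X\to X_R$ is the quotient of $X$ by the submodule $X\cdot\ker\Phi=\ker\Lambda\cdot X$.

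First I would invoke \propref{C*bundle}: since $G$ acts freely and properly on the $C^*$-bundle $\AA\to X$, the homomorphism $\Phi\colon C^*(G\under\AA)\to C^*(\AA)^\alpha$ is an isomorphism, so $\ker\Phi=\{0\}$. It follows that $\ker\Lambda\cdot X=X\cdot\ker\Phi=\{0\}$, so $\Upsilon\colon X\to X_R$ is an isomorphism of imprimitivity bimodules. Transporting the faithful left action of $\Gamma_0(\AA)\rtimes_{\alpha,r} G$ on $X_R$ back along $\Upsilon$ exhibits $\Lambda$ as a factor of the faithful left action of $\Gamma_0(\AA)\rtimes_\alpha G$ on $X$, whence $\Lambda$ is injective. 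Since $\Lambda$ is the regular representation, it is surjective onto $\Gamma_0(\AA)\rtimes_{\alpha,r} G$ by definition of the reduced crossed product. Therefore $\Lambda$ is an isomorphism, as claimed.

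I do not expect any real obstacle here; the corollary is genuinely ``immediate'' once \thmref{Rieffel} and \propref{C*bundle} are in hand. The only point requiring a little care is the bookkeeping about how a surjection of imprimitivity bimodules pairs the kernels of its two coefficient maps, and this is standard Rieffel-correspondence machinery. One could equally phrase the same argument through \corref{SWA}, which identifies $\Phi$ with (an isomorphism after) the regular representation of $C^*(G\under\AA)$, but in the $C^*$-bundle case \propref{C*bundle} already gives the injectivity of $\Phi$ outright, so the route above is the most direct.
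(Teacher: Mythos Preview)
Your proposal is correct and follows exactly the route the paper intends: the paper states the corollary as an immediate consequence of \propref{C*bundle} and \thmref{Rieffel} without spelling out the argument, and your unpacking via the Rieffel correspondence (so that $\ker\Phi=0$ forces $\ker\Lambda=0$) is precisely the standard way to read off that implication.
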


\thmref{C*bundle} and \corref{C*bundle cor} allow us to recover
\cite[Theorem~2.2]{RWpullback}:

\begin{cor}
  If a group $G$ acts freely and properly on a locally compact
  Hausdorff space $X$ and also on a $C^*$-algebra $A$, then the
  crossed product $C_0(X,A)\rtimes G$ is Morita equivalent to the
  generalized fixed point algebra $C_0(X,A)^\alpha$.
\end{cor}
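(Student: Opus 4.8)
The plan is to reduce to \thmref{Rieffel} and \propref{C*bundle} by realizing $C_0(X,A)$ as the section algebra of a $C^*$-bundle over $X$ on which $G$ acts freely and properly.

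First I would take $\AA=X\times A$, the trivial $C^*$-bundle over $X$, so that $C^*(\AA)=\Gamma_0(\AA)=C_0(X,A)$, and let $G$ act on $\AA$ by the diagonal automorphisms $s\cdot(x,a)=(s\cdot x,s\cdot a)$, which cover the given action of $G$ on $X$. Freeness of this action is immediate from freeness on $X$, since $s\cdot(x,a)=(x,a)$ forces $s\cdot x=x$. For properness, note that the bundle projection $\AA\to X$ is a continuous $G$-equivariant map onto a proper $G$-space, so $\AA$ is itself a proper $G$-space: given compact $C\subset\AA\times\AA$, the relation $(s\cdot a,a)\in C$ confines $a$ to the compact image of $C$ under the second projection and, via the equivariant map to $X\times X$ together with properness there, confines $s$ to a compact subset of $G$; hence the preimage of $C$ under $(s,a)\mapsto(s\cdot a,a)$ is a closed subset of a compact set, therefore compact. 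Finally, the action $\alpha\colon G\to\aut C^*(\AA)$ associated to this bundle action (as in \thmref{Rieffel}) is precisely the diagonal action of $G$ on $C_0(X,A)$ appearing in the statement.

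Next I would invoke \thmref{Rieffel}. Its imprimitivity bimodule, the completion of $\Gamma_c(\AA)$, is a $C^*(\AA)\rtimes_\alpha G - C^*(G\under\AA)$ imprimitivity bimodule, so $C_0(X,A)\rtimes_\alpha G$ is Morita equivalent to $C^*(G\under\AA)$. Since $\XX=X$ is a space, \propref{C*bundle} identifies the surjection $\Phi\colon C^*(G\under\AA)\to C^*(\AA)^\alpha$ as an isomorphism, and $C^*(\AA)^\alpha=C_0(X,A)^\alpha$ is the generalized fixed point algebra formed, as in Step~1 of the proof of \propref{onto}, with respect to the canonical dense subalgebra $\Gamma_c(\AA)$ and the $G$-equivariant embedding of $C_0(X)$ into $M(C_0(X,A))$. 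Composing, $C_0(X,A)\rtimes_\alpha G$ is Morita equivalent to $C_0(X,A)^\alpha$, which is \cite[Theorem~2.2]{RWpullback}. By \corref{C*bundle cor} the regular representation $C_0(X,A)\rtimes_\alpha G\to C_0(X,A)\rtimes_{\alpha,r}G$ is moreover an isomorphism, so it is immaterial whether the crossed product in the statement is read as the full or the reduced one.

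The only step needing any attention is the bookkeeping in the first paragraph: checking that the diagonal $G$-action on $X\times A$ is free and proper and that it induces the diagonal action on $C_0(X,A)$, together with confirming that Rieffel's generalized fixed point algebra for this action --- formed with the canonical subalgebra --- coincides with the one in \cite{RWpullback}. All of this is routine, and every piece of real content has already been established in \thmref{Rieffel} and \propref{C*bundle}, so I do not anticipate any substantive obstacle.
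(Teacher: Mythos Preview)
Your proposal is correct and follows essentially the same approach as the paper: the paper's proof is the one-line observation that the result follows by applying \propref{C*bundle} and \thmref{Rieffel} (and \corref{C*bundle cor}) to the trivial $C^*$-bundle $A\times X\to X$, since $\Gamma_0(A\times X)\cong C_0(X,A)$. Your write-up simply fills in the bookkeeping that the paper leaves implicit.
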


\begin{proof}
  This follows by applying the above results to the trivial
  $C^*$-bundle $A\times X\to X$, since $\Gamma_0(A\times X)\cong
  C_0(X,A)$.
\end{proof}

\providecommand{\bysame}{\leavevmode\hbox to3em{\hrulefill}\thinspace}
\providecommand{\MR}{\relax\ifhmode\unskip\space\fi MR }
\providecommand{\MRhref}[2]{%
  \href{http://www.ams.org/mathscinet-getitem?mr=#1}{#2} }
\providecommand{\href}[2]{#2}

\end{document}